\newtheorem{thm}{Theorem}[section]
\newtheorem{cor}[thm]{Corollary}
\newtheorem{lmm}[thm]{Lemma}
\newtheorem{prp}[thm]{Proposition}
\theoremstyle{remark}
\newtheorem*{rmk*}{Remark}
\newtheorem*{ack}{Acknowledgement}
\newtheorem*{key}{Keywords}
\title{The Fourier coefficients of the McKay-Thompson series and the traces of CM values}
\author{\textsc{Toshiki Matsusaka}\footnote{Graduate School of Mathematics, Kyushu University, Motooka 744, Nishi-ku Fukuoka 819-0395, Japan.\newline e-mail: \texttt{toshikimatsusaka@gmail.com}}}
\begin{document}
\date{}

\maketitle

\begin{abstract}
The elliptic modular function $j(\tau)$ enjoys many beautiful properties. Its Fourier coefficients are related to the Monster group, and its CM values generate abelian extensions over imaginary quadratic fields. Kaneko gave an arithmetic formula for the Fourier coefficients expressed in terms of the traces of the CM values. In this article, we are concerned with analogues of Kaneko's result for the McKay-Thompson series of square-free level.
\end{abstract}

\begin{key}
\textit{elliptic modular function; Fourier coefficients; moonshine; complex multiplication; Jacobi forms.}
\end{key}

\section{Introduction}

Let $d$ be a positive integer such that $-d$ is congruent to $0$ or $1$ modulo $4$, and $\mathcal{Q}_d$ the set of positive definite binary quadratic forms $Q(X,Y) = [a,b,c] := aX^2+bXY+cY^2\ (a,b,c \in \mathbb{Z})$ of discriminant $-d$. The group $\mathrm{SL}_2(\mathbb{Z})$ acts on $\mathcal{Q}_d$ by $Q \circ [\begin{smallmatrix}\alpha & \beta \\\gamma & \delta \end{smallmatrix}] = Q(\alpha X + \beta Y, \gamma X + \delta Y)$. For each $Q \in \mathcal{Q}_d$, we define the corresponding CM point $\alpha_Q$ as the unique root in the upper half-plane $\mathfrak{H}$ of $Q(X,1)=0$. We write $\Gamma_Q$ for the stabilizer of $Q$ in a group $\Gamma$. Let $j(\tau)$ ($\tau \in \mathfrak{H}$) be the elliptic modular function with Fourier expansion
\[ j(\tau) = \frac{1}{q} + 744 + 196884q + 21493760q^2 + 864299970q^3+ \cdots, \]
where $q := e^{2\pi i \tau}$. For a positive integer $m$, let $\varphi_m(j)$ be the unique polynomial in $j$ satisfying $\varphi_m(j(\tau)) = q^{-m}+O(q)$. For each $m$, we define the modular trace function by
\[ \textbf{t}_m(d) := \sum_{Q \in \mathcal{Q}_d/\mathrm{SL}_2(\mathbb{Z})}\frac{1}{|\mathrm{PSL}_2(\mathbb{Z})_Q|}\varphi_m(j(\alpha_Q)). \]
Zagier \cite[Theorem 1, 5]{Zagier02} showed that the generating function
\begin{equation}\label{Zagier}
g_m(\tau) := \sum_{\substack{d>0 \\ -d \equiv 0, 1 (4)}} \textbf{t}_m(d)q^d + 2\sigma_1(m) - \sum_{\kappa | m} \kappa q^{-\kappa^2}
\end{equation}
is a weakly holomorphic modular form of weight $3/2$ for the congruence subgroup $\Gamma_0(4)$, where $\sigma_1(n) := \sum_{d|n}d$. By virtue of this theorem, Kaneko \cite{Kaneko96} established an identity among modular forms of weight 2,
\begin{eqnarray*}
\frac{1}{2\pi i}\frac{d}{d\tau} j(\tau) = \frac{1}{2}((g_2 \theta_0)|U_4)(\tau),
\end{eqnarray*}
where $\theta_0(\tau) := \sum_{r \in \mathbb{Z}} q^{r^2}$, and $U_t$ is the operator $\sum a_n q^n \mapsto \sum a_{tn} q^n$ preserving modularity. In particular, the Fourier coefficients on both sides coincide. Let $c_n$ be the $n$th Fourier coefficient of $\varphi_1(j(\tau)) = j(\tau)-744$, then we have
\begin{equation}\label{Kaneko}
2n c_n = \sum_{r \in \mathbb{Z}} \textbf{t}_2(4n-r^2)
\end{equation}
for any $n \geq -1$ where $\textbf{t}_2(0)=6$, $\textbf{t}_2(-1)=-1$, $\textbf{t}_2(-4)=-2$, and $\textbf{t}_2(d)=0$ for all other negative $d$. Note that this sum is finite.\\

On the other hand, the Fourier coefficients of the $j$-function are related to the degrees of irreducible representations of the Monster group $\mathbb{M}$, the largest of the sporadic simple groups. This is known as monstrous moonshine. The first few observations are
\begin{eqnarray*}
c_1 &=& 196884 = 1 + 196883,\\
c_2 &=& 21493760 = 1+ 196883 + 21296876,\\
c_3 &=& 864299970 = 2 \times 1 + 2 \times 196883 + 21296876 + 842609326,
\end{eqnarray*}
where the sequence $\{1,\ 196883,\ 21296876,\ 842609326,\ \dots\}$ consists of degrees of irreducible representations of the Monster group. Conway and Norton \cite{CN79} formulated the monstrous moonshine conjecture as follows.
\begin{itemize}
\item There exists a graded infinite-dimensional $\mathbb{M}$-module
\[ V^{\natural} = \bigoplus_{n=-1}^{\infty} V_n^{\natural} \]
which satisfies $\mathrm{dim}V_n^{\natural} = c_n$ for $n \geq -1$. It is called the monster module.
\item For each $g \in \mathbb{M}$, we define the McKay-Thompson series
\[ T_g(\tau) := \sum_{n=-1}^{\infty} \mathrm{Tr}(g|V_n^{\natural}) q^n. \]
Then there exists a genus 0 subgroup $\Gamma_g \subset \mathrm{SL}_2(\mathbb{R})$ such that $T_g(\tau)$ is a hauptmodul on $\Gamma_g$. In other words, The fields $A_0(\Gamma_g)$ of modular functions on $\Gamma_g$ is generated by $T_g$, that is, $A_0(\Gamma_g)=\mathbb{C}(T_g)$.
\end{itemize}
In 1992, R. Borcherds \cite{Bor92} proved this conjecture.
\begin{rmk*}
(i) For the identity element $e \in \mathbb{M}$, we have $T_e(\tau) = j(\tau) - 744$.\\
(ii) For other McKay-Thompson series, similar connections are observed. (See \cite[Section 7.3: More Monstrous Moonshine]{Gannon}.) For instance, the Fourier coefficients of
\[ T_{2A}(\tau) := \frac{1}{q} + 4372q + 96256q^2 + 1240002q^3 + \cdots \]
can be expressed in terms of the degrees of irreducible representations of the Baby Monster group, that is, $4372 = 1+ 4371$, $96256 = 1 + 96255$, $1240002 = 2 \times 1 + 4371 + 96255 + 1139374,\ \dots$, where the sequence $\{1,\ 4371,\ 96255,\ 1139374,\ \dots\}$ consists of degrees of irreducible representations of the Baby Monster group.\\
\end{rmk*}

In this paper, we are concerned with the analogues of Kaneko's formula (\ref{Kaneko}) for the McKay-Thompson series of level $N$ such that $N$ is a square-free integer and the genus of the congruence subgroup $\Gamma_0(N)$ is 0, that is, $N$ = 2, 3, 5, 6, 7, 10, and 13. (Kaneko's result is the case of $N=1$.) For these $N$, let $\Gamma_0^*(N)$ be the Fricke group, which is generated by $\Gamma_0(N)$ and all Atkin-Lehner involutions $W_e$ for $e$ such that $e|N$ and $(e, N/e)=1$. Here $W_e$ is a matrix of the form $\frac{1}{\sqrt{e}} [\begin{smallmatrix}xe & y \\zN & we \end{smallmatrix}]$ with $\mathrm{det}W_e = 1$ and $x, y, z, w\in \mathbb{Z}$. Let $d$ be a positive integer such that $-d$ is congruent to a square modulo $4N$. We denote by $\mathcal{Q}_{d, N} := \{[a,b,c] \in \mathcal{Q}_d\ |\ a \equiv 0 \pmod{N} \}$ on which $\Gamma_0^*(N)$ acts. Moreover, we fix an integer $h \pmod{2N}$ with $h^2 \equiv -d \pmod{4N}$ and denote by $\mathcal{Q}_{d,N,h} := \{ [a,b,c] \in \mathcal{Q}_{d,N}\ |\ b \equiv h \pmod{2N} \}$ on which $\Gamma_0(N)$ acts. For genus zero groups $\Gamma_0(N)$ and $\Gamma_0^*(N)$, the corresponding hauptmoduln $j_N(\tau)$ and $j_N^*(\tau)$ can be described by means of the Dedekind $\eta$-function $\eta(\tau) := q^{1/24} \prod_{n=1}^{\infty} (1-q^n)$,
\begin{eqnarray*}
j_p(\tau) &=& T_{pB}(\tau) = \biggl( \frac{\eta(\tau)}{\eta(p\tau)} \biggr)^{\frac{24}{p-1}} + \frac{24}{p-1}\ \ \ \ \ \ \ \ \ \ \ \ \ \ \ \ \ \ \ \ \ \ \ \ \ \ \ \ (N = p = 2, 3, 5, 7, 13),\\
j_p^*(\tau) &=& T_{pA}(\tau) = \biggl( \frac{\eta(\tau)}{\eta(p\tau)} \biggr)^{\frac{24}{p-1}} + \frac{24}{p-1} + p^{\frac{12}{p-1}}\biggl( \frac{\eta(p\tau)}{\eta(\tau)} \biggr)^{\frac{24}{p-1}} \ \ (N = p = 2, 3, 5, 7, 13),\\
j_6(\tau) &=& T_{6E}(\tau) = \biggl( \frac{\eta(2\tau)\eta(3\tau)^3}{\eta(\tau)\eta(6\tau)^3} \biggr)^3-3,\\
j_6^*(\tau) &=& T_{6A}(\tau) = \biggl( \frac{\eta(\tau)\eta(3\tau)}{\eta(2\tau)\eta(6\tau)} \biggr)^6 + 6 +2^6\biggl( \frac{\eta(2\tau)\eta(6\tau)}{\eta(\tau)\eta(3\tau)} \biggr)^6,\\
j_{10}(\tau) &=& T_{10E}(\tau) = \biggl( \frac{\eta(2\tau)\eta(5\tau)^5}{\eta(\tau)\eta(10\tau)^5} \biggr)-1,\\
j_{10}^*(\tau) &=& T_{10A}(\tau) = \biggl( \frac{\eta(\tau)\eta(5\tau)}{\eta(2\tau)\eta(10\tau)} \biggr)^4 + 4 +2^4\biggl( \frac{\eta(2\tau)\eta(10\tau)}{\eta(\tau)\eta(5\tau)} \biggr)^4.
\end{eqnarray*}

\noindent For a weakly holomorphic modular function $f$ on $\Gamma_0(N)$, we define a modular trace function by
\[\textbf{t}_f^{(h)}(d) := \sum_{Q \in \mathcal{Q}_{d,N,h}/\Gamma_0(N)} \frac{1}{|\overline{\Gamma_0(N)}_Q|}f(\alpha_Q),\]
and for a weakly holomorphic modular function $f$ on $\Gamma_0^*(N)$, we define another trace function by
\[\textbf{t}_f^*(d) := \sum_{Q \in \mathcal{Q}_{d,N}/\Gamma_0^*(N)} \frac{1}{|\overline{\Gamma_0^*(N)}_Q|}f(\alpha_Q),\]
where $\overline{\Gamma} := \Gamma/\{\pm I\}$. Note that $\textbf{t}_f^{(h)}(d)$ is independent of the choice of $h$ for above $N$ in the particular case of $f \in A_0(\Gamma_0^*(N))$, then we can write $\textbf{t}_f(d) = \textbf{t}_f^{(h)}(d)$ simply. Moreover in the special case of $f = \varphi_m(j_N^*)$, it is the unique polynomial in $j_N^*$ satisfying $\varphi_m(j_N^*(\tau)) = q^{-m} + O(q)$, we put $\textbf{t}_m^{(N)}(d) := \textbf{t}_f(d)$ and $\textbf{t}_m^{(N*)}(d) := \textbf{t}_f^*(d)$. Ohta \cite{Ohta09} and the author and Osanai \cite{MO17} obtained the analogues of Kaneko's formula (\ref{Kaneko}) in the cases of $N = p =$ 2, 3, and 5, first found experimentally, and then showed the coincidence of $q$-series by using the Riemann-Roch theorem. In the present paper, we use the theory of Jacobi forms to generalize (\ref{Kaneko}). Let $c_n^{(N)}$ and $c_n^{(N*)}$ be the $n$th Fourier coefficients of $j_N(\tau)$ and $j_N^*(\tau)$, respectively.

\begin{thm}\label{Main}
For any $n \geq -1$, we have
\begin{eqnarray*}
2nc_n^{(p)} &=& \sum_{r \in \mathbb{Z}} \textbf{t}_2^{(p*)}(4n-r^2) + \frac{24(3-p \sigma_1(2/p))}{p-1} \sigma_1^{(p)}(n) \ \ \ \ \ \ \ (p = 2, 3, 5, 7, 13),\\
2nc_n^{(6)} &=& \sum_{r \in \mathbb{Z}} \textbf{t}_2^{(6*)}(4n-r^2) + 7\sigma_1^{(6)}(n) + 26\sigma_1^{(3)}(n/2) - 3\sigma_1^{(2)}(n/3),\\
2nc_n^{(10)} &=& \sum_{r \in \mathbb{Z}} \textbf{t}_2^{(10*)}(4n-r^2) + 4\sigma_1^{(10)}(n) + 12\sigma_1^{(5)}(n/2),\\
2nc_n^{(p*)} &=& \sum_{r \in \mathbb{Z}} \Biggl\{ \textbf{t}_2^{(p*)}(4n-r^2) - \textbf{t}_2^{(p*)}(4pn-r^2)\Biggr\}\ \ \ \ \ \ \ \ \ \ \ \ \ \ \ (p = 2, 3, 5, 7, 13),\\
2nc_n^{(p_1p_2*)} &=& \sum_{r \in \mathbb{Z}} \Biggl\{ \textbf{t}_2^{(p_1p_2*)}(4n-r^2) - \textbf{t}_2^{(p_1p_2*)}(4p_1n-r^2)\\
& &\ \ \ \ \ \ \ \ \ \ -\textbf{t}_2^{(p_1p_2*)}(4p_2n-r^2) + \textbf{t}_2^{(p_1p_2*)}(4p_1p_2n-r^2)\Biggr\}\ \ (p_1p_2 = 6, 10),
\end{eqnarray*}
where $\sigma_1^{(N)}(n) := \sum_{\substack{d|n \\ d \not \equiv 0 (N)}} d$ for a positive integer $n$. If $x \not \in \mathbb{Z}_{\geq0}$, the value of $\sigma_1^{(N)}(x)$ is 0, and we put $\sigma_1^{(N)}(0) := (N-1)/24$. Furthermore, we define additional values as follows,
\begin{eqnarray*}
\textbf{t}_2^{(N*)}(0) := \left\{\begin{array}{ll}5\ \ \ \ \ N=2, \\3\ \ \ \ \ N=3,5,7,13, \\5/2\ \ N=6,10,\end{array}\right.\ \textbf{t}_2^{(N*)}(-1) := -1,\ \textbf{t}_2^{(N*)}(-4) := -2,
\end{eqnarray*}
and $\textbf{t}_2^{(N*)}(d):=0$ for other negative $d$.
\end{thm}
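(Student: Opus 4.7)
The plan is to construct, for each admissible level $N$, a weight-$2$ Jacobi form of index $N$ whose theta decomposition encodes the trace generating series $\sum_d \textbf{t}_2^{(N*)}(d) q^d$, and then to identify its specialization at $z = 0$ (after an index-lowering operator) with $\frac{1}{2\pi i}\frac{d}{d\tau} j_N(\tau)$, respectively $\frac{1}{2\pi i}\frac{d}{d\tau} j_N^*(\tau)$, up to a weight-$2$ Eisenstein correction accounting for the multiple cusps of $\Gamma_0(N)$.

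First I would generalize Zagier's theorem (\ref{Zagier}) to level $N$. Using a theta-lift / Bruinier--Funke style argument, for each $h \pmod{2N}$ the series
\[
g_f^{(h)}(\tau) := \sum_d \textbf{t}_f^{(h)}(d) q^d + (\text{explicit principal and constant part})
\]
should be the holomorphic part of a vector-valued harmonic Maass form of weight $3/2$ under the Weil representation attached to the discriminant form of level $4N$. Specializing $f = \varphi_2(j_N^*)$ prescribes the principal part, and the auxiliary values $\textbf{t}_2^{(N*)}(0), \textbf{t}_2^{(N*)}(-1), \textbf{t}_2^{(N*)}(-4)$ declared in the theorem are exactly those forced on the completion at the non-generic $d$.

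Next, by the Eichler--Zagier correspondence, the vector $(g_{\varphi_2(j_N^*)}^{(h)})_h$ assembles into a weakly holomorphic Jacobi form of weight $2$ and index $N$,
\[
\phi(\tau, z) := \sum_{h \bmod 2N} g_{\varphi_2(j_N^*)}^{(h)}(\tau)\, \theta_{N,h}(\tau, z),
\]
so that setting $z = 0$ and applying an appropriate $U$-operator produces $\sum_r \textbf{t}_2^{(N*)}(4n - r^2)$ as the $q^n$-coefficient. Both $\phi(\tau, 0)$ and $\frac{1}{2\pi i}\frac{d}{d\tau} j_N(\tau)$ are weakly holomorphic modular forms of weight $2$ on $\Gamma_0(N)$ with matching principal parts at $i\infty$, so their difference lies in the finite-dimensional space of holomorphic weight-$2$ Eisenstein series on $\Gamma_0(N)$. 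For these genus-zero levels this space is spanned by the old forms $E_2(\tau) - \ell E_2(\ell \tau)$ with $\ell \mid N$, whose Fourier coefficients are precisely the $\sigma_1^{(\ell)}$ appearing in the theorem, and comparing a few leading coefficients determines the linear combination.

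For the Fricke identities $2nc_n^{(N*)} = \sum_r \{\cdots\}$, the derivative $\frac{d}{d\tau} j_N^*$ is a weight-$2$ modular form on $\Gamma_0^*(N)$, which has a unique cusp and hence no holomorphic weight-$2$ Eisenstein series, so no correction term appears. On the trace side, inclusion--exclusion over the Atkin--Lehner involutions $W_e$ for $e \mid N$ projects $\phi$ onto its $\Gamma_0^*(N)$-invariant part; since $W_e$ translates the trace index by $d \mapsto ed$, this produces the alternating sum $\sum_{e \mid N}(-1)^{\omega(e)}\textbf{t}_2^{(N*)}(4en - r^2)$ of the theorem. The main obstacle I anticipate is pinning down the exact Eisenstein correction in the non-Fricke cases -- in particular the rational constants $\frac{24(3 - p\sigma_1(2/p))}{p-1}$ and the mixed signs for $N = 6, 10$ -- and verifying the auxiliary values at $d \in \{0, -1, -4\}$; both require a careful normalization of the theta decomposition at each cusp of $\Gamma_0(N)$ and a consistent choice of representatives for $\mathcal{Q}_{d, N, h}/\Gamma_0(N)$.
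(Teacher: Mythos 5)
Your overall strategy --- encode the traces in a weight-$2$, index-$N$ Jacobi form via the Bruinier--Funke/Kim theta lift, specialize to a weight-$2$ form on $\Gamma_0(N)$, and compare with $2j_N'$ modulo holomorphic Eisenstein series --- is the paper's strategy. But there is one genuine gap and two steps whose mechanism you have misdescribed. The gap: you infer that the difference of $2j_N'(\tau)$ and the trace generating function lies in $M_2(\Gamma_0(N))$ from the matching of principal parts \emph{at $i\infty$ alone}. For $N>1$ the curve $X_0(N)$ has several cusps, and a weakly holomorphic weight-$2$ form holomorphic at $i\infty$ may still have poles at $0$, $1/p_1$, $1/p_2$. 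One must prove that $G_2^{(N*)}(\tau):=\sum_n\bigl(\sum_r \mathbf{t}_2^{(N*)}(4n-r^2)\bigr)q^n$ is holomorphic at every cusp other than $i\infty$; the paper devotes a separate proposition to this, computing the expansion at each cusp through the vector-valued transformation laws of the components $h_\mu$ and $\vartheta_{N,\mu}$, and for $N=2$ the pole of $h_2(\tau/2)$ is only cancelled by a zero of $\vartheta_{2,2}(\tau/2,0)$ --- so holomorphy there is not automatic. Without this step the reduction to the Eisenstein space is unsupported.

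The two misdescribed steps. First, ``setting $z=0$ and applying an appropriate $U$-operator'' does not yield the desired series if taken literally: the $q^n$-coefficient of $\phi(\tau,0)$ is $\sum_r \mathbf{t}_2^{(N)}(4Nn-r^2)$ (unstarred traces, arguments $4Nn-r^2$), and $\phi(\tau,0)$ is modular on $\mathrm{SL}_2(\mathbb{Z})$, not $\Gamma_0(N)$. The operator actually needed is $\phi\mapsto\tau^{-2}\sum_{\ell=0}^{N-1}\phi\bigl(-\tfrac{1}{N\tau},\tfrac{\ell}{N}\bigr)$ (averaging over $N$-torsion points composed with a Fricke-type substitution), and passing from $\mathbf{t}_2^{(N)}$ to $\mathbf{t}_2^{(N*)}$ requires the separate identity $\mathbf{t}_2^{(N*)}(d)=2^{-\mu_N(d)}\mathbf{t}_2^{(N)}(d)$ comparing $\Gamma_0(N)$- and $\Gamma_0^*(N)$-orbits of forms in $\mathcal{Q}_{d,N,h}$; these computations are the substance of the specialization step. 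Second, for the Fricke identities the shifts $d\mapsto 4en-r^2$ do not come from Atkin--Lehner involutions acting on the trace index; the paper obtains them by substituting the hauptmodul relations $j_p^*=j_p-p\,j_p|U_p$ and $j_{p_1p_2}^*=j_{p_1p_2}-p_1j_{p_1p_2}|U_{p_1}-p_2j_{p_1p_2}|U_{p_2}+p_1p_2j_{p_1p_2}|U_{p_{1}p_{2}}$ into the already-proved non-Fricke formulas, whereupon the Eisenstein contributions cancel. Your remark that $\Gamma_0^*(N)$ has a single cusp correctly predicts that no Eisenstein correction survives, but it does not by itself produce the alternating sum.
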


\begin{rmk*}
By virtue of relations between $\textbf{t}_1^{(N*)}(d)$ and $\textbf{t}_2^{(N*)}(d)$, (see \cite{Kaneko96}, \cite{Kim04}, and \cite{Zagier02}), these formulas can be interpreted as the sum of $\textbf{t}_1^{(N*)}(d)$.
\end{rmk*}

The outline of this paper is as follows. In Section \ref{sec2} and \ref{sec3}, we give a review of the theory of Jacobi forms \cite{EZ85} and the work of Bruinier and Funke \cite{BF06}. In Section \ref{sec4} we prove Theorem \ref{Main}.\\

\begin{ack}
The author is grateful to Professor Masanobu Kaneko and Professor Ken Ono for carefully reading the manuscript and helpful comments.
\end{ack}

\section{The theory of Jacobi forms}\label{sec2}
In this section, we follow the expositions in \cite{EZ85}. Let $k$ and $m$ be integers. For a function $\phi : \mathfrak{H} \times \mathbb{C} \rightarrow \mathbb{C}$, we define slash operators by
\begin{eqnarray*}
\biggl( \phi |_{k,m} \left[\begin{array}{cc}a & b \\c & d \end{array}\right] \biggr) (\tau, z) &:=& (c\tau + d)^{-k} e^{-2\pi i m \frac{cz^2}{c\tau + d}} \phi \biggl(\frac{a\tau+b}{c\tau+d}, \frac{z}{c\tau+d} \biggr),\ \ \ \left[\begin{array}{cc}a & b \\c & d \end{array}\right] \in \mathrm{SL}_2(\mathbb{Z}),\\
(\phi |_m [\lambda\ \mu])(\tau, z) &:=& e^{2\pi i m(\lambda^2\tau + 2\lambda z)} \phi(\tau, z + \lambda \tau + \mu),\ \ \ [\lambda\ \mu] \in \mathbb{Z}^2.
\end{eqnarray*}
A weakly holomorphic Jacobi form of weight $k$ and index $m$ is a holomorphic function $\phi : \mathfrak{H} \times \mathbb{C} \rightarrow \mathbb{C}$ satisfying
\begin{itemize}
\item $\phi|_{k,m} M = \phi\ \ (M \in \mathrm{SL}_2(\mathbb{Z}))$,
\item $\phi|_m X = \phi\ \ (X \in \mathbb{Z}^2)$,
\item $\phi$ has a Fourier expansion of the form
\[ \phi(\tau,z) = \sum_{\substack{n \gg -\infty \\r\in\mathbb{Z}}} c(n,r) q^n \zeta^r,\ \ (q := e^{2\pi i \tau},\ \zeta := e^{2\pi i z}), \]
\end{itemize}
where the coefficients $c(n,r)$ depend only on the value of $4mn-r^2$ and on the class of $r \pmod{2m}$, that is, we can write as $c(n,r) = c_r(4mn-r^2)$, and it holds $c_{r'}(D) = c_r(D)\ \ (r' \equiv r\pmod{2m})$. This property gives us coefficients $c_{\mu}(D)$ for all $\mu \in \mathbb{Z}/2m\mathbb{Z}$ and all integers $D$ satisfying $D \equiv -\mu^2 \pmod{4m}$, namely
\[ c_{\mu}(D) := c\biggl( \frac{D+r^2}{4m}, r\biggr),\ \ (r \in \mathbb{Z},\ r \equiv \mu\ (\mathrm{mod}\ 2m)). \]
For $D \not \equiv -\mu^2 \pmod{4m}$, we define $c_{\mu}(D) =0$, and set
\[ h_{\mu}(\tau) := \sum_{D \gg -\infty} c_{\mu}(D) q^{D/4m},\ \ \ (\mu \in \mathbb{Z}/2m\mathbb{Z}). \]
In addition, we put the theta functions
\[ \vartheta_{m, \mu}(\tau, z) := \sum_{\substack{r\in\mathbb{Z} \\ r \equiv \mu\ (\mathrm{mod}\ 2m)}} q^{r^2/4m}\zeta^r,\ \ \ (\mu \in \mathbb{Z}/2m\mathbb{Z}), \]
then $\phi$ has the following decomposition,
\begin{equation}\label{ThetaDecomp}
\phi(\tau,z) = \sum_{\mu = 0}^{2m-1} h_{\mu}(\tau) \vartheta_{m,\mu}(\tau,z).
\end{equation} According to \cite[Section 5]{EZ85}, $h_{\mu}$ and $\vartheta_{m,\mu}$ satisfy the following transformation laws,
\begin{equation}\label{modula}
\begin{split}
h_{\mu}(\tau+1) &= e^{-2\pi i \frac{\mu^2}{4m}} h_{\mu}(\tau),\\
h_{\mu}\bigl(-\frac{1}{\tau}\bigr) &= \frac{\tau^k}{\sqrt{2m\tau/i}} \sum_{\nu=0}^{2m-1} e^{2\pi i \frac{\mu \nu}{2m}} h_{\nu}(\tau),\\
\vartheta_{m,\mu}(\tau+1,z) &= e^{2\pi i \frac{\mu^2}{4m}}\vartheta_{m,\mu}(\tau,z),\\
\vartheta_{m,\mu}\bigl(-\frac{1}{\tau}, \frac{z}{\tau}\bigr) &= \sqrt{\tau/2m i}\ e^{2\pi i m z^2/\tau} \sum_{\nu=0}^{2m-1} e^{-2\pi i\frac{\mu \nu}{2m}} \vartheta_{m,\nu}(\tau,z).
\end{split}
\end{equation}
Moreover we have,
\begin{thm}\label{EZtheta}
$\cite[Theorem\ 5.1]{EZ85}$ The decomposition (\ref{ThetaDecomp}) gives an isomorphism between the space of weakly holomorphic Jacobi forms of weight $k$ and index $m$ and the space of weakly holomorphic vector valued modular forms $(h_{\mu})_{\mu \pmod{2m}}$ on $\mathrm{SL}_2(\mathbb{Z})$ satisfying the above transformation laws and some cusp conditions.
\end{thm}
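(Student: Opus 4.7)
The plan is to exhibit the isomorphism by constructing an explicit two-way correspondence between Jacobi forms and tuples $(h_\mu)_{\mu \in \mathbb{Z}/2m\mathbb{Z}}$, and to verify that each transformation law and cusp condition on one side translates into the required condition on the other. The starting observation is that the elliptic transformation $\phi|_m X = \phi$ for $X\in\mathbb{Z}^2$ completely determines how $\phi$ depends on the variable $z$, once its dependence on $\tau$ along a set of representative exponents is known; this is why a finite collection of one-variable functions $h_\mu$ can encode the entire Jacobi form.

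First I would derive the decomposition from a given Jacobi form $\phi$. Taking $[\lambda,\mu]=[0,1]$ yields $\phi(\tau,z+1)=\phi(\tau,z)$, so $\phi$ admits a Fourier expansion $\phi(\tau,z)=\sum_{r\in\mathbb{Z}} a_r(\tau)\zeta^r$. Taking $[\lambda,\mu]=[1,0]$ gives the quasi-periodicity $\phi(\tau,z+\tau)=q^{-m}\zeta^{-2m}\phi(\tau,z)$, which translates into the recursion $a_{r+2m}(\tau)=q^{r+m}a_r(\tau)$. Setting $b_r(\tau):=q^{-r^2/4m}a_r(\tau)$ makes this recursion collapse into $b_{r+2m}=b_r$, so $b_r$ depends only on the residue class $\mu:=r\bmod 2m$, and I define $h_\mu(\tau):=b_\mu(\tau)$. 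Grouping the Fourier expansion by residue classes then produces exactly the decomposition \eqref{ThetaDecomp}, and comparing coefficients confirms that the Fourier coefficient $c(n,r)$ equals $c_{r\bmod 2m}(4mn-r^2)$ as asserted in the preamble.

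Next I would extract the modular transformation laws for the $h_\mu$. Applying the identity $\phi|_{k,m}M=\phi$ for $M=T$ and $M=S$ to the decomposition, and then substituting the known transformations \eqref{modula} of $\vartheta_{m,\mu}$, yields identities of the shape $\sum_\mu H_\mu(\tau)\vartheta_{m,\mu}(\tau,z)\equiv 0$ where $H_\mu$ are expressions in $h_\mu(\tau\pm1)$ or $h_\nu(-1/\tau)$. The key technical lemma is the linear independence of the theta functions $\vartheta_{m,\mu}$ over functions of $\tau$: their Fourier expansions in $\zeta$ are supported on distinct residue classes modulo $2m$, so coefficient extraction in $\zeta$ forces each $H_\mu$ to vanish. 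This yields precisely the stated transformation laws for $h_\mu$ under $T$ and $S$, which generate $\mathrm{SL}_2(\mathbb{Z})$. The cusp conditions $c_\mu(D)=0$ for $D\ll0$ transfer directly from the Fourier expansion hypothesis on $\phi$.

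Finally I would prove the converse: given a tuple $(h_\mu)$ satisfying the asserted transformation laws and cusp conditions, I define $\phi$ by the right-hand side of \eqref{ThetaDecomp} and verify the Jacobi conditions. Holomorphy is immediate. The elliptic invariance $\phi|_m X=\phi$ follows from the well-known quasi-periodicity of each $\vartheta_{m,\mu}$ in $z$, which combines with the fact that $r^2/4m$ encodes the correct quadratic phase. The modular invariance $\phi|_{k,m}M=\phi$ for $M=T,S$ follows by reversing the coefficient-extraction computation above, since the $2m\times 2m$ matrices appearing in \eqref{modula} for $h_\mu$ and for $\vartheta_{m,\mu}$ are mutually inverse (up to the scalar factors that produce the weight $k$ and the exponential in $z$). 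The cusp growth condition on each $h_\mu$ forces the finite-order condition on $\phi$ at the cusp. The main obstacle throughout is keeping track of the quadratic exponential factors and the square roots in \eqref{modula}, and verifying that the Weil representation matrices appearing on the two sides of the decomposition are exactly inverse transposes; once this bookkeeping is organized the rest is linear algebra driven by the linear independence of the theta series.
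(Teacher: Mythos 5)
Your argument is correct and is essentially the standard proof of \cite[Theorem 5.1]{EZ85}: the paper itself states this result without proof, quoting Eichler--Zagier, and your derivation (elliptic invariance $\Rightarrow$ periodicity and the recursion $a_{r+2m}=q^{r+m}a_r$ $\Rightarrow$ the theta decomposition, then linear independence of the $\vartheta_{m,\mu}$ via their disjoint $\zeta$-supports to extract the vector-valued transformation laws, and the reverse substitution for the converse) is exactly the argument in that reference. The only point worth making explicit is that the $h_\mu$ are expansions in $q^{1/4m}$, so holomorphy and the growth condition at the cusp must be phrased accordingly, but this is the ``some cusp conditions'' the statement already leaves implicit.
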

Finally, we show an easy lemma for a proof of Theorem \ref{Main}.

\begin{lmm}\label{sw}
Let $\phi(\tau,z)$ be a weakly holomorphic Jacobi form of even weight $k$ and index $m$. Then the map
\[ \phi(\tau,z) \mapsto \tilde{\phi}(\tau):=\tau^{-k} \sum_{\ell = 0}^{m-1} \phi\Bigl(-\frac{1}{m\tau}, \frac{\ell}{m}\Bigr) \]
sends a weakly holomorphic Jacobi form to a weakly holomorphic modular form of weight $k$ on $\Gamma_0(m)$.
\end{lmm}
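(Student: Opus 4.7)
The plan is to verify the two defining conditions for $\tilde\phi$ to be a weakly holomorphic modular form of weight $k$ on $\Gamma_0(m)$: the transformation law under $\Gamma_0(m)$, and controlled growth at the cusps. The modularity is the substantive part; the cusp behaviour will then follow from the weakly holomorphic hypothesis on $\phi$, together with the observation that $\tau\mapsto -1/(m\tau)$ interchanges the cusps $0$ and $i\infty$ of $\Gamma_0(m)$, while the remaining cusps of $\Gamma_0(m)$ are reached by elements of the group under which $\tilde\phi$ will already have been shown to transform correctly.

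For modularity, I would take $\gamma=[\begin{smallmatrix} a & b \\ cm & d\end{smallmatrix}]\in\Gamma_0(m)$ and compute $\tilde\phi(\gamma\tau)$ directly. The crucial observation is that the Fricke-type matrix $S=[\begin{smallmatrix} 0 & -1 \\ m & 0\end{smallmatrix}]$ conjugates $\gamma$ to $\gamma':=S\gamma S^{-1}=[\begin{smallmatrix} d & -c \\ -bm & a\end{smallmatrix}]\in\mathrm{SL}_2(\mathbb{Z})$, and a short calculation gives $-1/(m\gamma\tau)=\gamma'\tau'$ with $\tau':=-1/(m\tau)$. This rewrites each summand $\phi(-1/(m\gamma\tau),\ell/m)$ as $\phi(\gamma'\tau',\ell/m)$, on which the Jacobi modularity $\phi|_{k,m}\gamma'=\phi$ applies. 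Unpacking the slash operator produces the $\ell$-independent automorphy factor $(-bm\tau'+a)^k=((a\tau+b)/\tau)^k$, an exponential factor, and a shift of the elliptic variable from $\ell/m$ to $(\ell/m)(-bm\tau'+a)=-b\ell\tau'+a\ell/m$.

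To bring the elliptic argument back to the canonical form $r/m$ with $r\in\{0,\dots,m-1\}$, I would then invoke the elliptic periodicity $\phi|_m[\lambda,\mu]=\phi$ with integer parameters $\lambda=-b\ell$ and $\mu=(a\ell-r)/m$, where $r$ denotes the residue of $a\ell$ modulo $m$. A direct calculation shows that the exponential factor from this elliptic shift combines with the one produced by $\phi|_{k,m}\gamma'$ into a phase whose exponent is an integer multiple of $2\pi i$, and hence is trivial. Since $\gcd(a,m)=1$ (because $\det\gamma=1$), the map $\ell\mapsto a\ell\bmod m$ is a bijection of $\{0,1,\dots,m-1\}$, so after relabelling the sum returns to $\sum_\ell\phi(\tau',\ell/m)$. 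Collecting the prefactors and using $((a\tau+b)/\tau)^k\cdot\tau^k=(a\tau+b)^k$ together with $(\gamma\tau)^{-k}=(cm\tau+d)^k/(a\tau+b)^k$, the desired identity $\tilde\phi(\gamma\tau)=(cm\tau+d)^k\tilde\phi(\tau)$ drops out.

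The main obstacle is the phase bookkeeping: several exponentials involving $a$, $b$, $\ell$, $m$, and $r$ must cancel to $1$, and it is easy to introduce a sign error; the evenness of $k$ is useful here to sidestep branch ambiguities in auxiliary identifications. For the cusp condition, I would substitute the theta decomposition $\phi=\sum_\mu h_\mu\vartheta_{m,\mu}$ from Theorem \ref{EZtheta} into the definition of $\tilde\phi$, so that $\tilde\phi$ becomes a finite linear combination of terms $h_\mu(\tau')\,\vartheta_{m,\mu}(\tau',\ell/m)$; the weakly holomorphic behaviour of each $h_\mu$ together with the transformation laws (\ref{modula}) then yield at-worst polynomial growth of $\tilde\phi$ in the standard local parameter at every cusp of $\Gamma_0(m)$.
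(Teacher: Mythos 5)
Your proposal is correct and follows essentially the same route as the paper: both arguments rest on conjugating $\gamma\in\Gamma_0(m)$ by the Fricke-type matrix $\left[\begin{smallmatrix}0&-1\\m&0\end{smallmatrix}\right]$, applying the Jacobi transformation law together with the elliptic periodicity $\phi|_m[\lambda\ \mu]=\phi$, and relabelling the sum via a bijection of $\ell$ modulo $m$ (the paper organizes this as the $\Gamma_0(m)$-invariance of $\sum_{\ell}\phi(\tau,\ell/m)$ followed by the Fricke conjugation, whereas you carry out the combined computation directly). Your additional verification of the behaviour at the cusps via the theta decomposition is a point the paper leaves implicit.
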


\begin{proof}
First, for any $[\begin{smallmatrix}a & b \\c & d \end{smallmatrix}] \in \Gamma_0(m)$, we can easily see that
\[\Biggl(\sum_{\ell=0}^{m-1}\phi\Bigl(\tau, \frac{\ell}{m}\Bigr)\Biggr)\Bigg|_k\left[\begin{array}{cc}a & b \\c & d \end{array}\right] = \biggl(\sum_{\ell=0}^{m-1}\phi\Big|_m \biggl[\frac{c\ell}{m}\ \ 0\biggr]\biggr)\Bigl(\tau,\frac{d\ell}{m}\Bigr) = \sum_{\ell=0}^{m-1}\phi\Bigl(\tau, \frac{\ell}{m}\Bigr).\]
Next we check for any $[\begin{smallmatrix}a & b \\c & d \end{smallmatrix}] \in \Gamma_0(m)$,
\begin{eqnarray*}
\Biggl(\tau^{-k} \sum_{\ell = 0}^{m-1} \phi\Bigl(-\frac{1}{m\tau}, \frac{\ell}{m}\Bigr)\Biggr) \Bigg|_k \left[\begin{array}{cc}a & b \\c & d \end{array}\right] &=& m^{k/2}\Biggl(\sum_{\ell = 0}^{m-1} \phi\Bigl(\tau, \frac{\ell}{m}\Bigr)\Biggr) \Bigg|_k \left[\begin{array}{cc}0 & -1/m \\1 & 0 \end{array}\right]\left[\begin{array}{cc}a & b \\c & d \end{array}\right]\\
&=&m^{k/2}\Biggl(\sum_{\ell = 0}^{m-1} \phi\Bigl(\tau, \frac{\ell}{m}\Bigr)\Biggr) \Bigg|_k\left[\begin{array}{cc}-d & c/m \\mb & -a \end{array}\right]\left[\begin{array}{cc}0 & 1/m \\-1 & 0 \end{array}\right]\\
&=& \tau^{-k} \sum_{\ell = 0}^{m-1} \phi\Bigl(-\frac{1}{m\tau}, \frac{\ell}{m}\Bigr).
\end{eqnarray*}
\end{proof}

\section{Bruinier and Funke's work}\label{sec3} 
In this section, we give a review of Bruinier and Funke's work \cite{BF06} and Kim's result \cite{Kim07}.
\subsection{Preliminaries}
Let $N$ be a square-free positive integer and $V$ a rational vector space of dimension 3 given by
\[ V(\mathbb{Q}) := \Biggl\{ X = \left[\begin{array}{cc}x_1 & x_2 \\x_3 & -x_1 \end{array}\right] \in M_2(\mathbb{Q}) \Biggr\} \]
with a non-degenerate symmetric bilinear form $(X, Y) := -N\cdot \mathrm{tr}(XY)$. We write $q(X) := N \cdot \mathrm{det}(X)$ for the associated quadratic form. We fix an orientation for $V$ once and for all. The action of $G(\mathbb{Q}) := Spin(V) \simeq SL_2(\mathbb{Q})$ on V is given as a conjugation, namely
\[ g.X := gXg^{-1} \]
for $X \in V$ and $g \in G(\mathbb{Q})$. Let $D$ be the orthogonal symmetric space defined by
\[ D := \{ \mathrm{span}(X) \subset V(\mathbb{R})\ |\ q(X)>0 \}.\]
For each line $z = \mathrm{span}([\begin{smallmatrix}x_1 & x_2 \\-1 & -x_1 \end{smallmatrix}]) \in D$, we can define an element in $\mathfrak{H}$ by $\tau = -x_1 + i\sqrt{x_2-x_1^2}$. In particular, this is a bijective map and preserves $G(\mathbb{Q})$-action, that is, this map sends $g.z := \mathrm{span}\bigl(g. [\begin{smallmatrix}x_1 & x_2 \\-1 & -x_1 \end{smallmatrix}]\bigr)$ to $g\tau$ for any $g \in G(\mathbb{Q})$. The image of $\tau$ under the inverse map is given by $\mathrm{span}(X(\tau))$ where
\[ X(\tau) = \left[\begin{array}{cc}-(\tau+\overline{\tau})/2 & \tau\overline{\tau} \\-1 & (\tau+\overline{\tau})/2 \end{array}\right]. \]

Let $L \subset V(\mathbb{Q})$ be an even lattice of full rank and $L^\#$ the dual lattice of $L$ defined by $L^\# := \{X \in V(\mathbb{Q})\ |\ (X, Y) \in \mathbb{Z},\ ^{\forall}Y \in L\}.$ Let $\Gamma$ be a congruence subgroup of $\mathrm{Spin}(L)$ which preserves $L$ and acts trivially on the discriminant group $L^\#/L$. The set $\mathrm{Iso}(V)$ of all isotropic lines in $V(\mathbb{Q})$ corresponds to $P^1(\mathbb{Q}) = \mathbb{Q} \cup \{\infty\}$ via the bijection
\[ \psi: P^1(\mathbb{Q}) \ni (\alpha: \beta) \mapsto \mathrm{span}\Biggl(\left[\begin{array}{cc}-\alpha \beta & \alpha^2 \\-\beta^2 & \alpha \beta \end{array}\right] \Biggr) \in \mathrm{Iso}(V). \]
In particular, we put the isotropic line $\ell_{\infty} := \psi(\infty) = \mathrm{span} ([\begin{smallmatrix}0 & 1 \\0 & 0 \end{smallmatrix}])$. We orient all lines $\ell \in \mathrm{Iso}(V)$ by requiring that $\sigma_{\ell}.[\begin{smallmatrix}0 & 1 \\0 & 0 \end{smallmatrix}]$ to be a positively oriented basis vector of $\ell$, where we pick $\sigma_{\ell} \in \mathrm{SL}_2(\mathbb{Z})$ such that $\sigma_{\ell}. \ell_{\infty} = \ell$. For each $\ell \in \mathrm{Iso}(V)$, we define three positive rational numbers $\alpha_{\ell}$ , $\beta_{\ell}$, and $\varepsilon_{\ell}$. First, we pick $\alpha_{\ell} \in \mathbb{Q}_{>0}$ as the width of the cusp $\ell$, that is,
\[ \sigma_{\ell}^{-1}\Gamma_{\ell}\sigma_{\ell} = \Biggl\{ \pm \left[\begin{array}{cc}1 & k \alpha_{\ell} \\0 & 1 \end{array}\right]\ \Bigg|\ k \in \mathbb{Z} \Biggr\}, \]
where $\Gamma_{\ell}$ is the stabilizer of the line $\ell$ in $\Gamma$. Next, we pick a positively oriented vector $Y \in V(\mathbb{Q})$ such that $\ell = \mathrm{span}(Y)$ and $Y$ is primitive in $L$. Then we define $\beta_{\ell} \in \mathbb{Q}_{>0}$ by $\sigma_{\ell}^{-1}.Y =[\begin{smallmatrix}0 & \beta_{\ell} \\0 & 0 \end{smallmatrix}].$ Finally, we put $\varepsilon_{\ell} = \alpha_{\ell}/\beta_{\ell}$. Note that the quantities $\alpha_{\ell}$ , $\beta_{\ell}$, and $\varepsilon_{\ell}$ depend only on the $\Gamma$-class of $\ell$. \\

Let $M:= \Gamma \backslash D$ be the modular curve. For $X \in V(\mathbb{Q})$ with $q(X)>0$, we define the Heegner point in $M$ by $D_X := \mathrm{span}(X) \in D$, which corresponds to an imaginary quadratic irrational in $\mathfrak{H}$. For $m \in \mathbb{Q}_{>0}$ and $h \in L^\#$, $\Gamma$ acts on $L_{h,m} := \{ X \in L+h\ |\ q(X) = m\}$ with finitely many orbits. For a weakly holomorphic modular function $f$ on $\Gamma$, we define the modular trace function by
\[ \textbf{t}_f (h,m) := \sum_{X \in \Gamma \backslash L_{h,m}} \frac{1}{|\overline{\Gamma}_X|} f(D_X). \]

Next, we consider a vector $X \in V(\mathbb{Q})$ with $q(X)<0$. For such a vector $X \in V(\mathbb{Q})$, we define a geodesic $c_X$ in $D$ by
\[ c_X := \{z \in D\ |\ z \perp X \}, \]
and we put $c(X) := \Gamma_X \backslash c_X$ in $M$. If $q(X) \in -N\cdot (\mathbb{Q}^{\times})^2$, then $X$ is orthogonal to the two isotropic lines $\ell_X = \mathrm{span}(Y)$ and $\tilde{\ell}_X = \mathrm{span}(\tilde{Y})$ such that $Y$ and $\tilde{Y}$ are positively oriented and the triple $(X, Y, \tilde{Y})$ is a positively oriented basis for $V$. We say $\ell_X$ is the line associated to X, and write $X \sim \ell_X$. We now define the modular trace function for negative index. For $X \in V(\mathbb{Q})$ of negative norm $q(X) \in -N\cdot (\mathbb{Q}^{\times})^2$, we pick $m \in \mathbb{Q}_{>0}$ and $r \in \mathbb{Q}$ such that $\sigma_{\ell_X}^{-1}. X = [\begin{smallmatrix}m & r \\0 & -m \end{smallmatrix}]$. In particular, the geodesic $c_X$ is given in $D \simeq \mathfrak{H}$ by
\[ c_X = \sigma_{\ell_X} \{\tau \in \mathfrak{H}\ |\ \mathrm{Re}(\tau) = -r/2m\},\]
and we write $\mathrm{Re}(c(X)) := -r/2m$. For $k \in \mathbb{Q}_{>0}$ and a cusp $\ell$, we put $L_{h,-Nk^2,\ell} := \{ X \in L_{h,-Nk^2}\ |\ X \sim \ell \}$ on which $\Gamma_{\ell}
$ acts. By \cite[Section 4, (4.7)]{BF06}, we have
\begin{eqnarray*}
\nu_{\ell}(h, -Nk^2) := \#\Gamma_{\ell} \backslash L_{h, -Nk^2, \ell} = \left\{\begin{array}{ll}2k \varepsilon_{\ell}\ \ $if $L_{h,-Nk^2,\ell} \neq \emptyset, \\0\ \ \ \ \ $ otherwise$.\end{array}\right.
\end{eqnarray*}
A weakly holomorphic modular function $f$ on $\Gamma$ has a Fourier expansion at the cusp $\ell$ of the form
\begin{equation}\label{FE}
f(\sigma_{\ell}\tau) = \sum_{n \in \frac{1}{\alpha_{\ell}} \mathbb{Z}} a_{\ell}(n)q^n.
\end{equation}
By \cite[Proposition 4.7]{BF06}, we can define the modular trace function for negative index by
\begin{eqnarray*}
\textbf{t}_f (h, -Nk^2) := &-&\sum_{\ell \in \Gamma \backslash \mathrm{Iso}(V)} \nu_{\ell}(h, -Nk^2) \sum_{n \in \frac{2k}{\beta_{\ell}}\mathbb{Z}_{<0}} a_{\ell}(n) e^{2\pi i r n}\\
&-&\sum_{\ell \in \Gamma \backslash \mathrm{Iso}(V)} \nu_{\ell}(-h, -Nk^2) \sum_{n \in \frac{2k}{\beta_{\ell}}\mathbb{Z}_{<0}} a_{\ell}(n) e^{2\pi i r' n},
\end{eqnarray*}
where $r = \mathrm{Re}(c(X))$ for any $X \in L_{h,-Nk^2,\ell}$ and $r' = \mathrm{Re}(c(X))$ for any $X \in L_{-h,-Nk^2,\ell}$. If $m \in \mathbb{Q}_{<0}$ is not of the form $m = -Nk^2$ with $k\in \mathbb{Q}_{>0}$, we put $\textbf{t}_f(h,m) = 0$. In particular, $\textbf{t}_f(h,m) = 0$ for $m \ll 0$.\\

Finally, the modular trace function for zero index is defined by
\[\textbf{t}_f(h,0) := -\frac{\delta_{h,0}}{2\pi} \int_{M}^{reg} f(\tau) \frac{dxdy}{y^2}\ \ (\tau = x+iy), \]
where $\delta_{h,0}$ is the Kronecker delta. By \cite[Remark 4.9]{BF06}, we have
\begin{equation}\label{trace0}
\textbf{t}_f(h,0) = 4\delta_{h,0} \sum_{\ell \in \Gamma \backslash \mathrm{Iso}(V)} \alpha_{\ell} \sum_{n \in \mathbb{Z}_{\geq 0}}a_{\ell}(-n)\sigma_1(n).
\end{equation}

\subsection{Modularity of the modular trace function}
\begin{thm}$\cite[Theorem\ 4.5]{BF06}$ \label{BFmain}
Let $f$ be a weakly holomorphic modular function on $\Gamma$ with Fourier expansion as in (\ref{FE}), and assume that the constant coefficients of $f$ at all cusps of $M$ vanish. Then the generating function
\[ I_h(\tau, f) := \sum_{m \geq 0}\textbf{t}_f(h,m) q^m + \sum_{k>0}\textbf{t}_f(h,-Nk^2) q^{-Nk^2} \]
satisfies the following transformation laws,
\begin{eqnarray*}
I_h(\tau+1, f) &=& e^{2\pi i \frac{(h,h)}{2}}I_h(\tau,f),\\
I_h(-\frac{1}{\tau},f) &=& \sqrt{\tau}^3\frac{\sqrt{i}}{\sqrt{|L^\#/L|}} \sum_{h' \in L^\#/L}e^{-2\pi i (h,h')}I_{h'}(\tau,f).
\end{eqnarray*}
\end{thm}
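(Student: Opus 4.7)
The plan is to realize $I_h(\tau,f)$ as the regularized theta lift of $f$ against the Kudla--Millson theta kernel $\theta_{KM,h}(\tau,z)$ attached to the signature $(1,2)$ quadratic space $(V,q)$ and the coset $h \in L^{\#}/L$. By the work of Kudla and Millson, this kernel is a non-holomorphic modular form of weight $3/2$ in $\tau$, transforming under the Weil representation on the components indexed by $L^{\#}/L$, and defines a closed $(1,1)$-form on $M$ in the $z$-variable that is Poincar\'e dual to the sum of Heegner divisors. The two transformation laws claimed for $I_h$ are exactly those of the weight-$3/2$ Weil representation of $L$, so once we identify
\[ I_h(\tau,f) \;=\; \int_M^{reg} f(z)\,\theta_{KM,h}(\tau,z), \]
the modularity will follow mechanically from the modularity of the kernel in $\tau$.

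First I would set up the regularization of this integral in the Borcherds style, truncating $M$ at a horocycle of height $T$ near each cusp $\ell$ and letting $T\to\infty$; this is needed because $f$ is only weakly holomorphic and grows exponentially at the cusps. Then I would compute the Fourier expansion in $\tau$ by unfolding the sum over $X\in L+h$ inside $\theta_{KM,h}$ against $\Gamma$. For $m>0$ the Schwartz piece of the kernel is sharply peaked at the Heegner point $D_X$ as $\mathrm{Im}(\tau)\to\infty$, so a standard Gaussian integration shows that the coefficient of $q^m$ collapses to
\[ \sum_{X\in\Gamma\backslash L_{h,m}}\frac{1}{|\overline{\Gamma}_X|}f(D_X) \;=\; \mathbf{t}_f(h,m), \]
which handles the positive-index range immediately.

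The main obstacle, and the bulk of the work, will be the coefficients of non-positive index. For $m=-Nk^2<0$ the relevant vectors $X$ are orthogonal to isotropic lines and their geodesics $c_X$ exit through cusps, so the integral over any tube around $c_X$ diverges and the required contribution must be extracted as a boundary term of the regularization. Concretely, for each cusp $\ell$ one conjugates to $\infty$ by $\sigma_\ell$, substitutes the Fourier expansion (\ref{FE}) of $f$ at $\ell$, and carries out the $z$-integral on the truncated horocycle term by term. The Gaussian factor in $\theta_{KM,h}$ selects precisely the frequencies $n\in\frac{2k}{\beta_\ell}\mathbb{Z}_{<0}$ and multiplies each by the phase $e^{2\pi i r n}$ with $r=\mathrm{Re}(c(X))$; summing over the $\nu_\ell(\pm h,-Nk^2)$ distinct $\Gamma_\ell$-orbits in $L_{\pm h,-Nk^2,\ell}$ reproduces the two sums in the definition of $\mathbf{t}_f(h,-Nk^2)$. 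The constant term is the $T\to\infty$ limit of the bulk integral on $M$; the hypothesis that the constant Fourier coefficients $a_\ell(0)$ vanish ensures that all divergent boundary contributions cancel, leaving the regularized pairing whose evaluation by the Stokes computation of \cite[Remark 4.9]{BF06} yields the expression (\ref{trace0}) for $\mathbf{t}_f(h,0)$.

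Once all three ranges of $m$ are matched, the identification of $I_h(\tau,f)$ with the regularized theta integral is complete, and the two transformation laws follow from the corresponding laws of $\theta_{KM,h}$: the phase $e^{2\pi i (h,h)/2}$ under $\tau\mapsto\tau+1$, and under $\tau\mapsto -1/\tau$ the finite Fourier transform over $L^{\#}/L$ with Gauss-sum multiplier $\sqrt{i}/\sqrt{|L^{\#}/L|}$ and half-integral-weight factor $\sqrt{\tau}^3$. The delicate book-keeping in the negative-index computation, in particular matching the combinatorial constants $\nu_\ell$, $\alpha_\ell$, $\beta_\ell$, $\varepsilon_\ell$ with the Gaussian normalizations, is where I expect most of the real effort to lie.
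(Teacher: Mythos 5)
The paper does not prove this statement at all: it is imported verbatim as \cite[Theorem 4.5]{BF06}, so the only meaningful comparison is with Bruinier and Funke's original argument. Your outline is a faithful reconstruction of exactly that argument --- realizing $I_h(\tau,f)$ as the regularized integral of $f$ against the Kudla--Millson theta kernel, reading off the positive coefficients from the Poincar\'e-dual (current) property of the kernel at the Heegner points, extracting the $-Nk^2$ coefficients as boundary terms at the cusps from the infinite geodesics, and inheriting the two transformation laws from the Weil-representation modularity of the kernel --- so it is the same approach as the source, correctly organized. The one point of emphasis worth adjusting is that for $m>0$ the identity $\int_D f(z)\,\varphi_{KM}(X,z)=f(D_X)$ (times the appropriate exponential factor) is an exact consequence of the current equation for the Kudla--Millson form, not merely a stationary-phase limit as $\mathrm{Im}(\tau)\to\infty$.
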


We consider some special cases. Let $p$ be a prime number, and put
\[ L = \Biggl\{ X = \left[\begin{array}{cc}b & c/p \\a & -b \end{array}\right]\ \Bigg|\ a,b,c \in \mathbb{Z} \Biggr\} \]
with $q(X) = p\cdot \mathrm{det}(X)$. Then the action of the congruence subgroup $\Gamma_0(p)$ preserves this lattice $L$. Kim \cite{Kim07} applied Theorem \ref{BFmain} and Theorem \ref{EZtheta} to this case, and obtained the following theorem.

\begin{thm}$\cite[Theorem\ 1.1]{Kim07}$ \label{Kmain}
Let $f(\tau) = \sum_na(n)q^n$ be a weakly holomorphic modular function on $\Gamma_0^*(p)$ with $a(0) = 0$. We put
\[\textbf{t}_f(0) = 2\sum_{n=1}^{\infty} a(-n)(\sigma_1(n) + p\sigma_1(n/p)), \]
and for negative $d$,
\[\textbf{t}_f(d) = \left\{\begin{array}{ll}-2^{\mu_p(\kappa)}\kappa \sum_{\kappa | m} a(-m) \ \ $if $d = -\kappa^2$ for some positive integer $\kappa, \\0\ \ \ \ \ \ \ \ \ \ \ \ \ \ \ \ \ \ \ \ \ \ \ \ \ \ \ \ \ \ \ \ \ \ \ \ \ \ $ otherwise$,\end{array}\right.\]
where $\mu_m(n)$ is the number of prime factors of $\mathrm{gcd}(m,n)$. If $\mathrm{gcd}(m,n)=1$, we put $\mu_m(n) := 0$. Then
\[ \sum_{\substack{n, r \in \mathbb{Z}}} \textbf{t}_f(4pn-r^2)q^n\zeta^r\]
is a weakly holomorphic Jacobi form of weight $2$ and index $p$.
\end{thm}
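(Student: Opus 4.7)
The plan is to combine the Bruinier--Funke modularity theorem (Theorem \ref{BFmain}) with the theta decomposition of Jacobi forms (Theorem \ref{EZtheta}), using the extra Fricke invariance of $f$ (since $f \in A_0(\Gamma_0^*(p))$ rather than merely $A_0(\Gamma_0(p))$) to collapse the BF trace functions into a function of the single integer $d$ appearing in the statement.

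First I would identify the discriminant group of $L$ concretely: a short calculation gives $L^{\#}/L \simeq \mathbb{Z}/2p\mathbb{Z}$, with representatives $h_\mu$ of quadratic-form value $q(h_\mu) \equiv -\mu^2/(4p) \pmod{\mathbb{Z}}$; in particular the exponents of $q$ in $I_{h_\mu}(\tau,f)$ lie in $\tfrac{1}{4p}\mathbb{Z}$ with residue $-\mu^2 \bmod 4p$. I would also record that $\Gamma_0(p)\backslash\mathfrak{H}$ has two cusps $\infty$ and $0$ of widths $\alpha_\infty = 1$ and $\alpha_0 = p$, swapped by the Fricke involution $W_p$. Theorem \ref{BFmain} then furnishes a vector-valued modular form $(I_{h_\mu}(\tau,f))_{\mu \bmod 2p}$ of weight $3/2$ whose transformation laws match those of the vector $(h_\mu)$ appearing in the theta decomposition (\ref{modula}) for $m = p$. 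Hence by Theorem \ref{EZtheta} the combination
\[
\Phi(\tau,z) := \sum_{\mu \bmod 2p} I_{h_\mu}(\tau,f)\,\vartheta_{p,\mu}(\tau,z)
\]
is a weakly holomorphic Jacobi form of weight $2$ and index $p$.

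Next I would use Fricke invariance of $f$ to show that $\textbf{t}_f(h_\mu, m)$ depends only on $d := 4pm$, i.e.\ is independent of $\mu$ beyond the necessary congruence $d \equiv -\mu^2 \pmod{4p}$. For $m > 0$ this amounts to observing that $W_p$ permutes the $\Gamma_0(p)$-orbits in $L_{h_\mu, m}$ while preserving each value $f(D_X)$; two residues $\mu_1,\mu_2$ sharing the same $d$ are related either by $W_p$ or by the sign involution $h \mapsto -h$ built into the second sum of the BF negative-index formula, yielding equality of their trace contributions. For $m = 0$ and $m = -Nk^2$, I would plug the equality of the Fourier expansions of $f$ at the two cusps $\infty$ and $0$ (a consequence of $f\circ W_p = f$) into the explicit formulas of Section \ref{sec3}: with the widths $\alpha_\infty = 1$ and $\alpha_0 = p$, formula (\ref{trace0}) gives $\textbf{t}_f(0) = 2\sum_n a(-n)(\sigma_1(n)+p\,\sigma_1(n/p))$, while the negative-index formula gives $\textbf{t}_f(-\kappa^2) = -2^{\mu_p(\kappa)}\kappa\sum_{\kappa \mid m} a(-m)$; the factor $2^{\mu_p(\kappa)}$ arises because both cusps contribute an isotropic line of the required norm precisely when $p \mid \kappa$.

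Once these identifications are in place, the reindexing $D = 4pn - r^2$, $\mu \equiv r \pmod{2p}$ rewrites $\Phi(\tau,z)$ as $\sum_{n,r \in \mathbb{Z}} \textbf{t}_f(4pn - r^2)\,q^n \zeta^r$, completing the proof. The main obstacle I anticipate is the Fricke-collapse step: verifying that $W_p$ together with $h \mapsto -h$ genuinely identifies \emph{all} residue classes $\mu$ sharing the same $d$, and simultaneously tracking the combinatorial constants at the two cusps precisely enough to reproduce $\sigma_1(n) + p\,\sigma_1(n/p)$ and $2^{\mu_p(\kappa)}$ exactly, rather than up to some ambiguous factor of $2$ or $p$.
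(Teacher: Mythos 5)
Your proposal follows essentially the same route as the paper: Theorem \ref{Kmain} is quoted from Kim \cite{Kim07}, whose proof (which the paper reproduces in detail for composite level in Section \ref{sec3}) is exactly the combination of Theorem \ref{BFmain} with the theta decomposition of Theorem \ref{EZtheta} for the level-$p$ lattice, together with the cusp data and the Atkin--Lehner-invariant Fourier expansions you describe. The one bookkeeping point you leave open --- the ambiguous factor of $2$ --- is settled the way the paper's lemma on $\textbf{t}_f(h,d/4p_1p_2)=2\textbf{t}_f(d)$ settles it: $L_{h,m}$ contains both $X$ and $-X$ (positive and negative definite forms with the same Heegner point), so the Bruinier--Funke positive-index trace is twice the quadratic-form trace, and the stated $\textbf{t}_f(0)$ and $\textbf{t}_f(-\kappa^2)$ are correspondingly half of the quantities produced by (\ref{trace0}) and the negative-index formula.
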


In the same way, we consider the case of the level $p_1p_2$, where $p_1$ and $p_2$ are distinct prime numbers. We put
\[ L = \Biggl\{ X = \left[\begin{array}{cc}b & c/p_1p_2 \\a & -b \end{array}\right]\ \Bigg|\ a,b,c \in \mathbb{Z} \Biggr\} \]
with $q(X) = p_1p_2\cdot \mathrm{det}(X)$. The congruence subgroup $\Gamma_0(p_1p_2)$ preserves $L$, and acts trivially on the discriminant group $L^\#/L$, which is expressed as
\[ L^\# / L = \Biggl\{L + \left[\begin{array}{cc}h/2p_1p_2 & 0 \\0 & -h/2p_1p_2 \end{array}\right]\ \Bigg|\ h = 0, 1, 2, \dots, 2p_1p_2-1 \Biggr\} \cong \mathbb{Z}/{2p_1p_2 \mathbb{Z}}. \]
There are four $\Gamma_0(p_1p_2)$-inequivalent cusps $\infty$, $0$, $1/p_1$, and $1/p_2$. They correspond to
\[ \ell_{\infty} := \mathrm{span}\Biggl(\left[\begin{array}{cc}0 & 1 \\0& 0 \end{array}\right] \Biggr),\ \ell_0 := \mathrm{span}\Biggl(\left[\begin{array}{cc}0 & 0 \\-1& 0 \end{array}\right] \Biggr),\ \ell_{1/p_1} := \mathrm{span}\Biggl(\left[\begin{array}{cc}-p_1 & 1 \\-p_1^2& p_1 \end{array}\right] \Biggr),\]
and \[\ell_{1/p_2} := \mathrm{span}\Biggl(\left[\begin{array}{cc}-p_2 & 1 \\-p_2^2& p_2 \end{array}\right] \Biggr)\]
via the bijective map $\psi$. For these isotropic lines, we can compute the quantities $\alpha_{\ell}$, $\beta_{\ell}$, and $\varepsilon_{\ell}$ as follows.
\begin{equation}\label{abc}
\begin{split}
& \alpha_{\ell_{\infty}} = 1,\ \ \beta_{\ell_{\infty}} = \frac{1}{p_1p_2},\ \ \varepsilon_{\ell_{\infty}}=p_1p_2,\\
& \alpha_{\ell_{0}} = p_1p_2,\ \ \beta_{\ell_{0}} = 1,\ \ \varepsilon_{\ell_{0}}=p_1p_2,\\
& \alpha_{\ell_{1/p_1}} = p_2,\ \ \beta_{\ell_{1/p_1}} = \frac{1}{p_1},\ \ \varepsilon_{\ell_{1/p_1}}=p_1p_2,\\
& \alpha_{\ell_{1/p_2}} = p_1,\ \ \beta_{\ell_{1/p_2}} = \frac{1}{p_2},\ \ \varepsilon_{\ell_{1/p_2}}=p_1p_2.
\end{split}
\end{equation}

A weakly holomorphic modular function $f(\tau) = \sum_na(n)q^n$ on $\Gamma_0^*(p_1p_2)$ has a Fourier expansion of the form (\ref{FE}) at each cusp $\ell$. By direct calculation, we have
\begin{equation}\label{FC}
\begin{split}
a_{\ell_{\infty}}(n) &= a(n),\\
a_{\ell_{0}}(n/p_1p_2) &= a(n),\\
a_{\ell_{1/p_1}}(n/p_2) &= e^{-2\pi i n \frac{b}{p_2}} a(n),\ \ bp_1 \equiv -1 \pmod{p_2},\\
a_{\ell_{1/p_2}}(n/p_1) &= e^{-2\pi i n \frac{b'}{p_1}} a(n),\ \ b'p_2 \equiv -1 \pmod{p_1}.
\end{split}
\end{equation}
We assume the constant term $a(0) = 0$, then we have the constant terms at all cusps vanish by (\ref{FC}). Applying Theorem \ref{BFmain} to the above case, the function $I_h(\tau, f)$ satisfies
\begin{eqnarray*}
I_h(\tau+1,f) &=& e^{-2\pi i \frac{h^2}{4p_1p_2}}I_h(\tau, f),\\
I_h(-\frac{1}{\tau}, f) &=& \frac{\tau^2}{\sqrt{2p_1p_2\tau/i}}\sum_{h' = 0}^{2p_1p_2-1}e^{2\pi i \frac{hh'}{2p_1p_2}}I_{h'}(\tau,f).
\end{eqnarray*}
By Theorem \ref{EZtheta}, we can obtain a weakly holomorphic Jacobi form of weight 2 and index $p_1p_2$. For further details, we compute the modular trace functions.

\begin{lmm}
For a positive integer $d$, we have
\[ \textbf{t}_f(h, d/4p_1p_2) = 2\textbf{t}_f(d).\]
\end{lmm}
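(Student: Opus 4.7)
The plan is to translate the Bruinier--Funke lattice trace $\textbf{t}_f(h, d/(4p_1p_2))$ into an orbit sum over binary quadratic forms and then match it against the $\Gamma_0(N)$-trace $\textbf{t}_f^{(h)}(d)$ defined in the introduction. Writing $N = p_1 p_2$, a typical element of $L + h$ has the matrix form
\[
X = \left[\begin{array}{cc} \beta/(2N) & c/N \\ a & -\beta/(2N) \end{array}\right],\qquad a, c \in \mathbb{Z},\ \beta \equiv h \pmod{2N},
\]
and a direct calculation gives $q(X) = -(\beta^2 + 4Nac)/(4N)$. Hence $q(X) = d/(4N)$ is equivalent to $\beta^2 + 4Nac = -d$, which forces $a \ne 0$ and $ac < 0$.

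I would next split $L_{h, d/(4N)}$ according to the sign of $a$. The assignments $X \mapsto [Na, -\beta, -c]$ for $a > 0$ and $X \mapsto [-Na, \beta, c]$ for $a < 0$ should define $\Gamma_0(N)$-equivariant bijections
\[
\{X \in L_{h, d/(4N)} : a > 0\} \xrightarrow{\ \sim\ } \mathcal{Q}_{d, N, -h}, \qquad \{X \in L_{h, d/(4N)} : a < 0\} \xrightarrow{\ \sim\ } \mathcal{Q}_{d, N, h}.
\]
Using $X(\tau)$ from Section \ref{sec3} and normalizing $X$ so that its lower-left entry is $-1$, one checks by direct computation that $D_X$ equals the CM point $\alpha_{Q_X}$ of the associated form in each case. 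Because the $\Gamma_0(N)$-conjugation action on $L$ matches the classical substitution action on $\mathcal{Q}_{d, N}$, the stabilizer orders coincide, giving
\[
\textbf{t}_f\bigl(h, d/(4p_1p_2)\bigr) = \textbf{t}_f^{(h)}(d) + \textbf{t}_f^{(-h)}(d).
\]

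To conclude, I would invoke the hypothesis $f \in A_0(\Gamma_0^*(p_1p_2))$: the Fricke involution $W_N$ sends $[Na, \beta, C]$ to $[NC, -\beta, a]$ and hence produces a bijection $\mathcal{Q}_{d, N, h} \to \mathcal{Q}_{d, N, -h}$ that preserves $\Gamma_0(N)$-orbits and $f$-values (since $f$ is $W_N$-invariant). This yields the independence-of-$h$ identity $\textbf{t}_f^{(h)}(d) = \textbf{t}_f^{(-h)}(d) = \textbf{t}_f(d)$ noted in the introduction, and the right-hand side above collapses to $2\textbf{t}_f(d)$.

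The step that requires the most care is the middle one: simultaneously tracking the sign of $a$, the residue of $\beta \pmod{2N}$, and the positive-definiteness conventions so that $D_X = \alpha_{Q_X}$ holds on the nose, while also confirming that conjugation on matrices and substitution on forms give identical $\Gamma_0(N)$-orbit structures and stabilizer orders. Once this bookkeeping is in place, the final reduction to $2\textbf{t}_f(d)$ via Fricke invariance is essentially formal.
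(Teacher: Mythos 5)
Your proposal is correct and follows essentially the same route as the paper: both identify the vectors of $L+h$ of norm $d/(4p_1p_2)$ with binary quadratic forms of discriminant $-d$, split according to the sign of the lower-left entry (equivalently, positive versus negative definiteness of the associated form), and obtain the factor $2$ from the two contributions $\mathcal{Q}_{d,N,h}$ and $\mathcal{Q}_{d,N,-h}$ together with the $h$-independence of the trace for $f \in A_0(\Gamma_0^*(p_1p_2))$. The paper's own proof leaves the last steps implicit, so your explicit Atkin--Lehner argument for $\textbf{t}_f^{(h)}(d)=\textbf{t}_f^{(-h)}(d)$ fills in detail rather than changing the method.
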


\begin{proof}
For each vector
\[X = \left[\begin{array}{cc}b+h/2p_1p_2 & c/p_1p_2 \\-a & -b-h/2p_1p_2 \end{array}\right] \in L + h\]
with positive norm $d/4p_1p_2$, we put
\[ Q = \left[\begin{array}{cc}ap_1p_2 & bp_1p_2+h/2 \\bp_1p_2+h/2 & c \end{array}\right] = p_1p_2 \left[\begin{array}{cc}0 & -1\\1 & 0 \end{array}\right]X.\]
Then we can see that the discriminant of the corresponding binary quadratic form is $-d = (2bp_1p_2+h)^2-4acp_1p_2 = -4p_1p_2 q(X)$. Note that if $a$ is positive (resp. negative), $Q$ is positive (resp. negative) definite. Thus we have
\begin{eqnarray*}
\textbf{t}_f(h, d/4p_1p_2) &=& \sum_{X \in \Gamma_0(p_1p_2) \backslash L_{h,d/4p_1p_2}} \frac{1}{|\overline{\Gamma_0(p_1p_2)}_X|}f(D_X)\\
&=& 2\sum_{Q \in \mathcal{Q}_{d,p_1p_2,h}/\Gamma_0(p_1p_2)} \frac{1}{|\overline{\Gamma_0(p_1p_2)}_Q|}f(\alpha_Q) = 2\textbf{t}_f(d).
\end{eqnarray*}
\end{proof}

Next we compute the modular trace functions for zero or negative index. By (\ref{trace0}), (\ref{abc}), and (\ref{FC}), we have
\[ \textbf{t}_f(h,0) = 4\delta_{h,0}\sum_{n=0}^{\infty}a(-n)\Bigl\{\sigma_1(n) + p_1p_2\sigma_1(n/p_1p_2) + p_1\sigma_1(n/p_1) + p_2\sigma_1(n/p_2)\Bigr\}. \]
Thus we define
\[\textbf{t}_f(0) := 2\sum_{n=0}^{\infty}a(-n)\Bigl\{\sigma_1(n) + p_1p_2\sigma_1(n/p_1p_2) + p_1\sigma_1(n/p_1) + p_2\sigma_1(n/p_2)\Bigr\}.\]
For a negative integer $d$, we define
\[\textbf{t}_f(d) = \left\{\begin{array}{ll}-2^{\mu_{p_1p_2}(\kappa)}\kappa \sum_{\kappa | m} a(-m) \ \ $if $d = -\kappa^2$ for some positive integer $\kappa, \\0\ \ \ \ \ \ \ \ \ \ \ \ \ \ \ \ \ \ \ \ \ \ \ \ \ \ \ \ \ \ \ \ \ \ \ \ \ \ $ otherwise$,\end{array}\right.\]
in the same way of Lemma 3.3 and Lemma 3.4 in \cite{Kim07}. Therefore, by Theorem \ref{BFmain} and Theorem \ref{EZtheta}, we obtain the following theorem.

\begin{thm}\label{levelpq}
Let $f(\tau) = \sum_na(n)q^n$ be a weakly holomorphic modular function on $\Gamma_0^*(p_1p_2)$ with $a(0) = 0$. We put
\[\textbf{t}_f(0) = 2\sum_{n=1}^{\infty}a(-n)\Bigl\{\sigma_1(n) + p_1p_2\sigma_1(n/p_1p_2) + p_1\sigma_1(n/p_1) + p_2\sigma_1(n/p_2)\Bigr\}, \]
and for negative $d$
\[\textbf{t}_f(d) = \left\{\begin{array}{ll}-2^{\mu_{p_1p_2}(\kappa)}\kappa \sum_{\kappa | m} a(-m) \ \ $if $d = -\kappa^2$ for some positive integer $\kappa, \\0\ \ \ \ \ \ \ \ \ \ \ \ \ \ \ \ \ \ \ \ \ \ \ \ \ \ \ \ \ \ \ \ \ \ \ \ \ \ $ otherwise$,\end{array}\right.\]
where $\mu_m(n)$ is the number of prime factors of $\mathrm{gcd}(m,n)$. If $\mathrm{gcd}(m,n)=1$, we put $\mu_m(n) := 0$. Then
\[ \sum_{\substack{n, r\in \mathbb{Z}}} \textbf{t}_f(4p_1p_2n-r^2)q^n\zeta^r\]is a weakly holomorphic Jacobi form of weight $2$ and index $p_1p_2$.
\end{thm}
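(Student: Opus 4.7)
The strategy is to follow the proof of Kim's Theorem \ref{Kmain} for prime level, now using the data for level $p_1p_2$ that the excerpt has already assembled: the lattice $L$, the four cusps $\ell_\infty,\ell_0,\ell_{1/p_1},\ell_{1/p_2}$, their invariants in (\ref{abc}), and the cuspidal Fourier coefficients in (\ref{FC}). The proof splits naturally into three stages: produce a vector-valued modular form via Theorem \ref{BFmain}, pass to a Jacobi form via Theorem \ref{EZtheta}, and identify the coefficients.

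First I would apply Theorem \ref{BFmain} to $f$, yielding functions $I_h(\tau,f)$ for $h\in L^{\#}/L\cong\mathbb{Z}/2p_1p_2\mathbb{Z}$ whose transformation laws are already recorded just before the theorem. A direct computation using $(X,Y)=-p_1p_2\,\mathrm{tr}(XY)$ gives $(h,h)/2=-h^2/(4p_1p_2)$ and $|L^{\#}/L|=2p_1p_2$, so these laws coincide exactly with the transformation laws (\ref{modula}) of the theta components of a weight-$2$, index-$p_1p_2$ Jacobi form. Theorem \ref{EZtheta} therefore produces a weakly holomorphic Jacobi form
\[
\phi(\tau,z):=\sum_{h\pmod{2p_1p_2}} I_h(\tau,f)\,\vartheta_{p_1p_2,h}(\tau,z)
\]
of weight $2$ and index $p_1p_2$.

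It then remains to identify the coefficient of $q^n\zeta^r$ in $\phi(\tau,z)$ with $2\textbf{t}_f(4p_1p_2n-r^2)$, so that halving $\phi$ gives the generating function in the statement. For positive discriminant this is immediate from the preceding lemma. For zero index, I would substitute the widths (\ref{abc}) and the Fourier expansions (\ref{FC}) into (\ref{trace0}); the four cusps contribute $\sigma_1(m)$, $p_1p_2\sigma_1(m/(p_1p_2))$, $p_1\sigma_1(m/p_1)$, and $p_2\sigma_1(m/p_2)$ against $a(-m)$, reproducing the definition of $\textbf{t}_f(0)$ in the theorem. The main technical step is the negative-index case: for each $k>0$ and each cusp $\ell$, one determines which residue classes $h\pmod{2p_1p_2}$ make $L_{h,-p_1p_2k^2,\ell}$ non-empty, evaluates $\mathrm{Re}(c(X))$ via $\sigma_\ell^{-1}X$, and sums the resulting exponential contributions over all four cusps together with the $h\leftrightarrow -h$ symmetry built into the definition of $\textbf{t}_f(h,-Nk^2)$. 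This is the two-prime analogue of Kim's Lemma 3.3 and 3.4 cited in the text.

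I expect this last step to be the principal obstacle. The delicate point is that $L_{h,-p_1p_2\kappa^2,\ell}$ is non-empty at different subsets of the four cusps depending on whether $\gcd(\kappa,p_1p_2)$ equals $1$, $p_1$, $p_2$, or $p_1p_2$: exactly $2^{\mu_{p_1p_2}(\kappa)}$ cusps contribute in each case, which explains the factor $2^{\mu_{p_1p_2}(\kappa)}$ in the theorem. Once these contributions are tallied, the full Fourier expansion of $\phi(\tau,z)$ matches $2\sum_{n,r}\textbf{t}_f(4p_1p_2n-r^2)q^n\zeta^r$, completing the proof.
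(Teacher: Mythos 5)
Your proposal follows the same route as the paper: apply Theorem \ref{BFmain} to get the vector-valued system $(I_h)_h$, match its transformation laws with (\ref{modula}) and invoke Theorem \ref{EZtheta} to produce the Jacobi form, then identify the positive-index coefficients via the lemma giving $\textbf{t}_f(h,d/4p_1p_2)=2\textbf{t}_f(d)$, the zero-index coefficient via (\ref{trace0}) with the data (\ref{abc}) and (\ref{FC}), and the negative-index coefficients by the two-prime analogue of Kim's Lemmas 3.3 and 3.4 — exactly as the paper does (the paper likewise defers that last computation to Kim's lemmas). Your cusp-counting explanation of the factor $2^{\mu_{p_1p_2}(\kappa)}$ is a correct gloss on that deferred step, so the proposal is sound and essentially identical in approach.
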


\section{Proof of Theorem \ref{Main}}\label{sec4}
Throughout this section we assume $N$ = 2, 3, 5, 6, 7, 10, or 13. We apply Theorem \ref{Kmain} and Theorem \ref{levelpq} to the special modular function $f=\varphi_2(j^*_N)$. Then we obtain
\begin{cor}\label{cor}
The generating function
\[g_2^{(N)}(\tau,z) := \sum_{\substack{n \geq 0\\r\in\mathbb{Z}}}\textbf{t}_2^{(N)}(4Nn-r^2)q^n\zeta^r\]
is a weakly holormophic Jacobi form of weight $2$ and index $N$, where
\begin{eqnarray*}
\textbf{t}_2^{(N)}(0) &=& \left\{\begin{array}{ll}6\ \ \ \ (N, 2)=1, \\10\ \ \ (N, 2)=2,\end{array}\right.\\
\textbf{t}_2^{(N)}(-1) &=& -1,\\
\textbf{t}_2^{(N)}(-4) &=& \left\{\begin{array}{ll}-2\ \ \ (N, 2)=1, \\-4\ \ \ (N, 2)=2,\end{array}\right.
\end{eqnarray*}
and $\textbf{t}_2^{(N)}(d) = 0$ for other negative $d$.
\end{cor}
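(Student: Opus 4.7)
The plan is to apply Theorem \ref{Kmain} when $N = p$ is prime and Theorem \ref{levelpq} when $N = p_1 p_2 \in \{6, 10\}$ to the choice $f = \varphi_2(j_N^*)$. By construction $\varphi_2(j_N^*)$ is a polynomial in the Fricke hauptmodul $j_N^*$, hence a weakly holomorphic modular function on $\Gamma_0^*(N)$, and its defining property $\varphi_2(j_N^*(\tau)) = q^{-2} + O(q)$ guarantees both that the constant Fourier coefficient $a(0) = 0$ (so the hypotheses of both theorems hold) and that the only nonzero principal-part coefficient is $a(-2) = 1$. Each theorem then already supplies a weakly holomorphic Jacobi form of weight $2$ and index $N$ whose coefficients are the traces $\textbf{t}_f(4Nn - r^2)$.

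The bulk of the work is then the numerical check that the prescribed values $\textbf{t}_2^{(N)}(0)$, $\textbf{t}_2^{(N)}(-1)$, $\textbf{t}_2^{(N)}(-4)$ agree with the trace recipes furnished by Theorem \ref{Kmain} and Theorem \ref{levelpq}. Substituting $a(-2) = 1$ into the formula $\textbf{t}_f(-\kappa^2) = -2^{\mu_N(\kappa)} \kappa \sum_{\kappa \mid m} a(-m)$, only $m = 2$ contributes, giving $\textbf{t}_f(-1) = -1$ and $\textbf{t}_f(-4) = -2 \cdot 2^{\mu_N(2)}$, which equals $-2$ when $N$ is odd and $-4$ when $N$ is even; for $\kappa \geq 3$ the condition $\kappa \mid 2$ fails, so $\textbf{t}_f(d) = 0$ at every other negative $d$. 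At $d = 0$, the prime-level formula evaluates to $\textbf{t}_f(0) = 2(\sigma_1(2) + p \sigma_1(2/p))$, which is $6$ for odd $p$ and $10$ for $p = 2$, while the composite-level formula yields $\textbf{t}_f(0) = 2(\sigma_1(2) + p_1 p_2 \sigma_1(2/p_1 p_2) + p_1 \sigma_1(2/p_1) + p_2 \sigma_1(2/p_2))$, which collapses to $2(3 + 0 + 2 + 0) = 10$ for both $N = 6$ and $N = 10$ (taking the ordering with $p_1 = 2$).

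Finally, I would note that the Jacobi form produced by Theorem \ref{Kmain}/\ref{levelpq} is a priori indexed by all $(n, r) \in \mathbb{Z}^2$, whereas $g_2^{(N)}(\tau, z)$ restricts to $n \geq 0$. This restriction loses no terms, because for $n \leq -1$ and any $r \in \mathbb{Z}$ the argument satisfies $4Nn - r^2 \leq -4N \leq -8$, which lies outside $\{0, -1, -4\}$, so the corresponding trace coefficient vanishes. Hence $g_2^{(N)}$ coincides with the full Jacobi form, establishing the corollary. No step presents a genuine obstacle; the only place where care is required is the bookkeeping in the level-$p_1 p_2$ case, where one must correctly apply the $\sigma_1(x) = 0$ convention for non-integral $x$ to isolate the single contributing term $p_1 \sigma_1(2/p_1) = 2$.
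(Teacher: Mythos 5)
Your proposal is correct and follows exactly the route the paper takes: the paper derives this corollary by applying Theorem \ref{Kmain} (prime level) and Theorem \ref{levelpq} (level $p_1p_2$) to $f=\varphi_2(j_N^*)$, and your substitution of $a(-2)=1$, $a(0)=0$ into the trace formulas for $d=0,-1,-4$ supplies precisely the bookkeeping the paper leaves implicit.
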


Note that we can obtain recursion formulas for the modular traces by applying Choi and Kim's method \cite{CK10} to this corollary. 

\begin{lmm}$\cite{Kim06}$\label{tracerel}
For a positive integer $d$, we have
\begin{eqnarray*}
\textbf{t}_2^{(N*)}(d) = 2^{-\mu_{N}(d)}\textbf{t}_2^{(N)}(d),
\end{eqnarray*}
where $\mu_N(d)$ is the number of prime factors of $\mathrm{gcd}(N,d)$.
\end{lmm}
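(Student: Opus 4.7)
The plan is to compare the two traces by exploiting the short exact sequence
\[ 1 \longrightarrow \Gamma_0(N) \longrightarrow \Gamma_0^*(N) \longrightarrow (\mathbb{Z}/2\mathbb{Z})^{k} \longrightarrow 1, \]
where $k$ is the number of distinct primes dividing $N$ and the quotient is generated by the images of the Atkin-Lehner involutions $W_e$ for $e \| N$.

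First, I would establish the global comparison
\[ \sum_{h} \textbf{t}_f^{(h)}(d) = 2^{k}\,\textbf{t}_f^*(d), \]
where the sum runs over $h \pmod{2N}$ with $h^2 \equiv -d \pmod{4N}$ and $f = \varphi_2(j_N^*)$. The left-hand side equals the single sum of $f(\alpha_Q)/|\overline{\Gamma_0(N)}_Q|$ over $Q \in \mathcal{Q}_{d,N}/\Gamma_0(N)$. For each $\Gamma_0^*(N)$-orbit $[Q]$, the quotient group acts transitively on the $\Gamma_0(N)$-orbits it contains, giving $2^k/n_Q$ sub-orbits with a common $\overline{\Gamma_0(N)}$-stabilizer of size $|\overline{\Gamma_0^*(N)}_Q|/n_Q$, where $n_Q$ is the order of the image of $\Gamma_0^*(N)_Q$ in $(\mathbb{Z}/2\mathbb{Z})^{k}$. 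Using the $\Gamma_0^*(N)$-invariance of $f$, the factors of $n_Q$ cancel and the identity follows by a standard orbit-stabilizer computation.

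Second, the traces $\textbf{t}_f^{(h)}(d)$ are independent of $h$, as already noted in the introduction (the Atkin-Lehner involutions permute the sectors $\mathcal{Q}_{d,N,h}$ in a $\Gamma_0(N)$-equivariant way while preserving $f$). Hence the left-hand side equals $\nu(d,N)\,\textbf{t}_2^{(N)}(d)$, where $\nu(d,N)$ is the number of $h \pmod{2N}$ with $h^2 \equiv -d \pmod{4N}$. A Chinese-Remainder argument, carried out prime-by-prime on squarefree $N$, gives $\nu(d,N) = 2^{k-\mu_N(d)}$: each prime $p \mid N$ with $p \nmid d$ contributes two local square roots, while each $p \mid \gcd(N,d)$ contributes exactly one. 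Combining the two steps yields
\[ \textbf{t}_2^{(N)}(d) = \frac{2^k}{\nu(d,N)}\,\textbf{t}_2^{(N*)}(d) = 2^{\mu_N(d)}\,\textbf{t}_2^{(N*)}(d), \]
which is equivalent to the asserted identity.

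The main obstacle is the local count at the prime $2$ when $N$ is even: the asymmetry between reducing $h$ modulo $2N$ and reducing $h^2$ modulo $4N$ means the $2$-adic case does not parallel the odd primes, and must be checked separately using the classification of squares mod $8$, together with a careful verification that the acting subgroup of Atkin-Lehner involutions remains transitive on the set of admissible $h$ after this modification.
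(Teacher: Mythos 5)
Your argument is correct, but it packages the comparison differently from the paper. The paper fixes a single sector $\mathcal{Q}_{d,N,h}$ and analyzes the fibers of the natural map $\mathcal{Q}_{d,p,h}/\Gamma_0(p) \to \mathcal{Q}_{d,p}/\Gamma_0^*(p)$ directly: when $p\nmid d$ the involution $W_p$ moves a form out of the sector and the map is a bijection with equal stabilizers, while when $p\mid d$ it is either a $2$-to-$1$ map or a bijection along which the $\Gamma_0^*(p)$-stabilizer doubles, giving $\textbf{t}_f(d)=2^{\mu_p(d)}\textbf{t}_f^*(d)$ in every case (and similarly for $N=p_1p_2$). You instead sum over all admissible sectors, prove $\sum_h \textbf{t}_f^{(h)}(d) = 2^{k}\,\textbf{t}_f^*(d)$ by orbit--stabilizer for the normal subgroup $\Gamma_0(N)\trianglelefteq\Gamma_0^*(N)$ --- this step is sound: normality makes the sub-orbit stabilizers conjugate of order $|\overline{\Gamma_0^*(N)}_Q|/n_Q$, so the $n_Q$ cancels --- and then divide by the number $\nu(d,N)=2^{k-\mu_N(d)}$ of sectors. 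Your version buys uniformity: the case split between ``orbit splits'' and ``stabilizer doubles'' is absorbed into the orbit--stabilizer formula, and prime and composite $N$ are treated identically. The price is two extra inputs the paper's route avoids: the $h$-independence of $\textbf{t}_f^{(h)}(d)$, which the paper only asserts in the introduction and which you should justify via the Atkin--Lehner action $W_e: h\mapsto h'$ with $h'\equiv -h \pmod{2e}$ and $h'\equiv h\pmod{2N/e}$ (transitive on admissible $h$ because each prime of $N$ not dividing $d$ contributes an independent sign flip); and the local count of square roots, where you rightly isolate the prime $2$ --- the count $\nu(d,N)=2^{k-\mu_N(d)}$ does survive there, since $h \bmod 4$ determines $h^2 \bmod 8$ and the residue $-d\equiv 5 \pmod 8$ is excluded by the standing hypothesis that $-d$ is a square modulo $4N$. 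With those two points written out, your proof is complete.
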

\begin{rmk*}
This lemma works for a general weakly holomorphic modular function $f$ on $\Gamma_0^*(N)$.
\end{rmk*}
\begin{proof}
We consider only the case of prime level $N=p$. We put the Atkin-Lehner involution $W_p = \frac{1}{\sqrt{p}}[\begin{smallmatrix}0 & -1 \\ p & 0 \end{smallmatrix}]$, and let $d$ be a positive integer. We take $h \pmod{2p}$ such that $h^2 \equiv -d \pmod{4p}$, then $h$ is divisible by $p$ if and only if $p$ divides $d$. For each $Q=[a,b,c] \in \mathcal{Q}_{d,p,h}$, the quadratic form $Q \circ W_p = [cp,-b,a/p]$ is also in $\mathcal{Q}_{d,p,h}$ if and only if $p$ divides $h$, that is, $p$ divides $d$. If $d$ is not divisible by $p$, then the map $\mathcal{Q}_{d,p,h}/\Gamma_0(p) \ni [a,b,c] \mapsto [a,b,c] \in \mathcal{Q}_{d,p}/\Gamma_0^*(p)$ is bijective, thus we have $\textbf{t}_f(d) = \textbf{t}^*_f(d)$ for a modular function $f$ on $\Gamma_0^*(p)$. If $p|d$ and $[a,b,c] \neq [cp,-b,a/p]$ in $\mathcal{Q}_{d,p,h}/\Gamma_0(p)$, then the map $\mathcal{Q}_{d,p,h}/\Gamma_0(p) \ni [a,b,c], [cp,-b,a/p] \mapsto [a,b,c] \in \mathcal{Q}_{d,p}/\Gamma_0^*(p)$ is 2-1 correspondence. If $p|d$ and $Q = [a,b,c] = [cp,-b,a/p]$ in $\mathcal{Q}_{d,p,h}/\Gamma_0(p)$, then it holds $|\overline{\Gamma_0^*(p)}_Q| = 2|\overline{\Gamma_0(p)}_Q|$. Therefore in both cases, we have $\textbf{t}_f(d) = 2\textbf{t}^*_f(d)$. In the same way, we can show the case of level $N=p_1p_2$.
\end{proof}

We define the modular trace function $\textbf{t}_2^{(N*)}(d)$ for non-positive index $d$ satisfying the relation in Lemma \ref{tracerel}. By Corollary \ref{cor} and Lemma \ref{sw}, we obtain a weakly holomorphic modular form of weight 2 on $\Gamma_0(N)$.

\begin{prp}\label{propsw}
The generating function
\[G_2^{(N*)}(\tau) := \sum_{n=-1}^{\infty} \Biggl( \sum_{r\in \mathbb{Z}}\textbf{t}_2^{(N*)}(4n-r^2) \Biggr)q^n\]
is a weakly holomorphic modular form of weight $2$ on $\Gamma_0(N)$.
\end{prp}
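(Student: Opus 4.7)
The plan is to apply Lemma \ref{sw} to the Jacobi form $g_2^{(N)}(\tau,z)$ of Corollary \ref{cor} and identify the resulting weight-$2$ modular form on $\Gamma_0(N)$ as a nonzero constant multiple of $G_2^{(N*)}(\tau)$ via Lemma \ref{tracerel}. Since Lemma \ref{sw} already guarantees modularity on $\Gamma_0(N)$, this identification is enough.

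Set $\tilde{g}_2^{(N)}(\tau) := \tau^{-2}\sum_{\ell=0}^{N-1} g_2^{(N)}(-1/(N\tau),\,\ell/N)$. Substituting the theta decomposition (\ref{ThetaDecomp}) and the transformation laws (\ref{modula}), the double sum over the theta index $\mu$ collapses by orthogonality of additive characters mod $2N$, leaving the clean identity
\[
 g_2^{(N)}\!\left(-\tfrac{1}{N\tau},\,\tfrac{\ell}{N}\right) \;=\; N^{2}\tau^{2}\,e^{2\pi i\ell^{2}\tau}\,g_2^{(N)}(N\tau,\,\ell\tau).
\]
Expanding the Fourier series of the right-hand side and completing the square via $s = r + 2\ell$ gives
\[
 \tilde{g}_2^{(N)}(\tau) \;=\; N^{2}\sum_{m}\Bigl(\sum_{s\in\mathbb Z} n(s,m)\,\textbf{t}_2^{(N)}(4m-s^{2})\Bigr)q^{m},
\]
where $n(s,m)$ counts the $\mu\in\mathbb Z/2N\mathbb Z$ satisfying both $\mu\equiv s\pmod 2$ and $\mu^{2}\equiv s^{2}-4m\pmod{4N}$.

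The crux is the combinatorial identity $n(s,m)\cdot 2^{\mu_N(D)} = 2^{\omega(N)}$ for $D := 4m-s^{2}$, valid whenever $-D$ is a square modulo $4N$ (so whenever $\textbf{t}_2^{(N)}(D)$ may be nonzero); here $\omega(N)$ denotes the number of distinct prime factors of $N$. I would prove this by Chinese remaindering $\mathbb Z/4N\mathbb Z$: at each odd prime $p\mid N$, the count of $\mu\pmod p$ with $\mu^{2}\equiv -D\pmod p$ is $1$ when $p\mid D$ and $2$ when $p\nmid D$; at the prime $2$ (relevant when $N\in\{2,6,10\}$), a direct case analysis of $\mu\pmod 4$ against $s\pmod 2$ and $-D\pmod 8$ supplies the factor that exactly cancels $2^{\mu_2(D)}$. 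Combining this with Lemma \ref{tracerel}, one obtains $\tilde{g}_2^{(N)} = N^{2}\cdot 2^{\omega(N)}\cdot G_2^{(N*)}$ for the coefficients at $m\geq 1$, and the non-positive terms agree with the conventions $\textbf{t}_2^{(N*)}(0)$, $\textbf{t}_2^{(N*)}(-1)$, $\textbf{t}_2^{(N*)}(-4)$ in Theorem \ref{Main} by a direct check (extending Lemma \ref{tracerel} via $\textbf{t}_2^{(N*)}(d)=2^{-\mu_N(d)}\textbf{t}_2^{(N)}(d)$, with $\mu_N(0)=\omega(N)$). The main obstacle is the prime-$2$ case of this combinatorial identity: the number of solutions to $\mu^{2}\equiv -D\pmod 8$ genuinely varies, and only after pairing with the parity constraint $\mu\equiv s\pmod 2$ and the factor $2^{\mu_2(D)}$ does the uniform factor $2^{\omega(N)}$ emerge.
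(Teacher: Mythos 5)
Your proposal is correct, and it shares the paper's overall skeleton --- apply Lemma \ref{sw} to the Jacobi form $g_2^{(N)}$ of Corollary \ref{cor} and identify $\tilde{g}_2^{(N)}$ with $N^2\,2^{\omega(N)}G_2^{(N*)}$ via Lemma \ref{tracerel} --- but the middle of the computation is organized genuinely differently. The paper keeps the theta decomposition (\ref{ThetaDecomp}) alive throughout: the sum over $\ell$ produces the character sum $\sum_{\ell}e^{2\pi i\ell\mu/N}$, which restricts to $\mu\in\{0,N\}$ and yields $\tilde{g}_2^{(N)}=N^2\bigl(g_2^{(N,0)}\theta_0^{(0)}+g_2^{(N,3)}\theta_0^{(1)}\bigr)$; the identification with $G_2^{(N*)}$ then rests on the displayed but unproved identity $\sum_{\nu\,\mathrm{even}}h_{\nu}(N\tau)=2^{\mu_N(N)}\sum_{d\equiv 0\ (4)}\textbf{t}_2^{(N*)}(d)q^{d/4}$ and its odd analogue. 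You instead collapse the theta decomposition at the outset into the single transformation $g_2^{(N)}\bigl(-\tfrac{1}{N\tau},\tfrac{\ell}{N}\bigr)=N^2\tau^2e^{2\pi i\ell^2\tau}g_2^{(N)}(N\tau,\ell\tau)$ --- which is just $\phi|_{2,N}S=\phi$ evaluated at $(N\tau,\ell\tau)$, so no orthogonality is even needed there --- then complete the square $s=r+2\ell$ and reduce everything to the count $n(s,m)$ of square roots of $-D$ modulo $4N$ in a fixed parity class. Your combinatorial identity $n(s,m)\,2^{\mu_N(D)}=2^{\omega(N)}$ is precisely the content of the paper's unproved display (note $\mu_N(N)=\omega(N)$), and your CRT verification is sound: at odd $p\mid N$ the count is $2^{1-[p\mid D]}$, and at $p=2$ the only problematic residue $-D\equiv 5\pmod 8$ is excluded because it is not a square modulo $4N$; the cases $D=0,-1,-4$ also come out consistent with the conventions $\textbf{t}_2^{(N*)}(d)=2^{-\mu_N(d)}\textbf{t}_2^{(N)}(d)$. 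In short, your route trades the vector-valued bookkeeping for an explicit residue count, and in doing so actually supplies a proof of the one step the paper asserts without argument.
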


\begin{proof}
Applying Lemma \ref{sw} to the generating function $g_2^{(N)}(\tau,z)$, we obtain a weakly holomorphic modular form of weight 2 on $\Gamma_0(N)$
\[\tilde{g}_2^{(N)}(\tau) = \frac{1}{\tau^2}\sum_{\ell=0}^{N-1}g_2^{(N)}\Bigl(-\frac{1}{N\tau},\frac{\ell}{N}\Bigr).\]
Since the Jacobi form $g_2^{(N)}(\tau, z)$ has a theta decomposition (\ref{ThetaDecomp})
\[g_2^{(N)}(\tau,z) = \sum_{\mu=0}^{2N-1} h_{\mu}(\tau) \vartheta_{N,\mu}(\tau,z),
\]
where $h_{\mu}(\tau)$ is a partial generating function
\[h_{\mu}(\tau) := \sum_{d \equiv -\mu^2\ (\mathrm{mod}\ 4N)}\textbf{t}_2^{(N)}(d)q^{d/4N},\]
the function $\tilde{g}_2^{(N)}(\tau)$ can be expressed as follows,
\[ \tilde{g}_2^{(N)}(\tau) = \frac{1}{\tau^2}\sum_{\ell=0}^{N-1}\sum_{\mu=0}^{2N-1}h_{\mu}\Bigl(-\frac{1}{N\tau}\Bigr)\vartheta_{N,\mu}\Bigl(-\frac{1}{N\tau},\frac{\ell}{N}\Bigr).\]
Note that we can easily see that
\[
\vartheta_{N,\mu}\Bigl(-\frac{1}{N\tau},\frac{\ell}{N}\Bigr) = e^{2\pi i\frac{\ell}{N}\mu} \vartheta_{N,\mu}\Bigl(-\frac{1}{N\tau},0\Bigr).
\]
By the modularity (\ref{modula}) of the functions $h_{\mu}(\tau)$ and $\vartheta_{N,\mu}(\tau,z)$, we have directly
\[\tilde{g}_2^{(N)}(\tau) = \frac{N}{2} \sum_{\mu=0}^{2N-1} \sum_{\ell=0}^{N-1}e^{2\pi i\frac{\ell}{N}\mu} \sum_{\nu=0}^{2N-1}e^{2\pi i\frac{\mu\nu}{2N}}h_{\nu}(N\tau)\sum_{n=0}^{2N-1}e^{-2\pi i\frac{\mu n}{2N}} \vartheta_{N,n}(N\tau,0).\]
Since the sum $\sum_{\ell=0}^{N-1}e^{2\pi i\frac{\ell}{N}\mu}$ is equal to $N$ or 0 according as $N|\mu$, we have
\begin{eqnarray*}
\tilde{g}_2^{(N)}(\tau) &=& \frac{N}{2} \Biggl\{ N \sum_{\nu=0}^{2N-1}h_{\nu}(N\tau)\sum_{n=0}^{2N-1}\vartheta_{N,n}(N\tau,0)\\
& &\ \ \ \ \ \ \ \ \ \ \ \ \ \ \ \ \ \ + N \sum_{\nu=0}^{2N-1}e^{\pi i\nu}h_{\nu}(N\tau)\sum_{n=0}^{2N-1}e^{-\pi in}\vartheta_{N,n}(N\tau,0)\Biggr\}\\
&=& \frac{N^2}{2}\Biggl\{\sum_{\nu=0}^{2N-1}\sum_{n=0}^{2N-1}h_{\nu}(N\tau)\vartheta_{N,n}(N\tau,0)+ \sum_{\nu=0}^{2N-1}\sum_{n=0}^{2N-1}(-1)^{\nu+n}h_{\nu}(N\tau)\vartheta_{N,n}(N\tau,0)\Biggr\}\\
&=& N^2\Biggl\{ \sum_{\nu: even}h_{\nu}(N\tau) \cdot \sum_{n: even}\vartheta_{N,n}(N\tau,0) +\sum_{\nu: odd}h_{\nu}(N\tau) \cdot \sum_{n: odd}\vartheta_{N,n}(N\tau,0) \Biggr\}\\
&=:& N^2 \bigl\{ g_2^{(N,0)}(\tau)\theta_0^{(0)}(\tau) + g_2^{(N,3)}(\tau)\theta_0^{(1)}(\tau) \bigr\}.
\end{eqnarray*}
By Lemma \ref{tracerel}, we have
\begin{eqnarray*}
g_2^{(N,0)}(\tau) &:=& \sum_{\nu: even}h_{\nu}(N\tau) = 2^{\mu_N(N)}\sum_{d \equiv 0\ (\mathrm{mod}\ 4)}\textbf{t}_2^{(N*)}(d)q^{d/4},\\
g_2^{(N,3)}(\tau) &:=& \sum_{\nu: odd}h_{\nu}(N\tau) = 2^{\mu_N(N)}\sum_{d \equiv 3\ (\mathrm{mod}\ 4)}\textbf{t}_2^{(N*)}(d)q^{d/4},\\
\theta_0^{(0)}(\tau) &:=& \sum_{n: even}\vartheta_{N,n}(N\tau,0) = \sum_{r: even}q^{r^2/4},\\
\theta_0^{(1)}(\tau) &:=& \sum_{n: odd}\vartheta_{N,n}(N\tau,0) = \sum_{r: odd}q^{r^2/4}.
\end{eqnarray*}
Then we see that
\begin{equation}\label{decom}
g_2^{(N,0)}(\tau)\theta_0^{(0)}(\tau) + g_2^{(N,3)}(\tau)\theta_0^{(1)}(\tau) = 2^{\mu_N(N)}\sum_{n=-1}^{\infty} \Biggl( \sum_{r\in \mathbb{Z}}\textbf{t}_2^{(N*)}(4n-r^2) \Biggr)q^n.
\end{equation}
Thus we conclude that
\[G_2^{(N*)}(\tau) = N^{-2} 2^{-\mu_N(N)} \tilde{g}_2^{(N)}(\tau)\]
is a weakly holomorphic modular form of weight 2 on $\Gamma_0(N)$.
\end{proof}

\begin{prp}
The function $G_2^{(N*)}(\tau)$ has a pole only at the cusp $\tau = i \infty$.
\end{prp}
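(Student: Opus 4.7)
The plan is to show $G_2^{(N*)}$ is holomorphic at every cusp of $\Gamma_0(N)$ other than $\tau = i\infty$. For squarefree $N$ with $\Gamma_0(N)$ of genus zero, the cusps of $\Gamma_0(N)$ are indexed by the divisors $d\mid N$, and the Atkin-Lehner group $\Gamma_0^*(N)/\Gamma_0(N) \simeq (\mathbb{Z}/2\mathbb{Z})^{\omega(N)}$ acts simply transitively on them. Consequently it suffices to prove that $G_2^{(N*)}|_2 W_e$ is holomorphic at $\tau = i\infty$ for every nontrivial Atkin-Lehner involution $W_e$; equivalently, by Proposition \ref{propsw}, that $\tilde{g}_2^{(N)}|_2 W_e$ is.

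I first treat the Fricke involution $W_N = \frac{1}{\sqrt{N}}\bigl(\begin{smallmatrix}0 & -1\\ N & 0\end{smallmatrix}\bigr)$, which swaps $\infty$ with $0$. Starting from the identity $\tilde{g}_2^{(N)}(\tau) = \tau^{-2}\sum_{\ell=0}^{N-1}g_2^{(N)}(-1/(N\tau),\ell/N)$ used in the proof of Proposition \ref{propsw} and the relation $-1/(N\cdot W_N\tau) = \tau$, a direct calculation yields
\[ \tilde{g}_2^{(N)}\bigl|_2 W_N(\tau) \;=\; N\sum_{\ell=0}^{N-1} g_2^{(N)}(\tau, \ell/N). \]
Since the Jacobi form $g_2^{(N)}$ has Fourier expansion supported on $n \geq 0$ by Corollary \ref{cor}, each summand on the right is a power series in $q$ with only non-negative exponents, so the right-hand side is holomorphic at $\tau = i\infty$; hence $G_2^{(N*)}$ is holomorphic at the cusp $0$. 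For $N$ prime this completes the proof, since $\{\infty,0\}$ exhausts the cusps of $\Gamma_0(N)$.

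For $N = p_1 p_2 \in \{6,10\}$ the cusps $1/p_1$ and $1/p_2$ remain; they are the images of $\infty$ under the partial Atkin-Lehner involutions $W_{p_2}$ and $W_{p_1}$. Choosing an explicit $W_{p_j} = \frac{1}{\sqrt{p_j}}\bigl(\begin{smallmatrix}p_j a & b \\ Nc & p_j d\end{smallmatrix}\bigr)$ with $p_j ad - p_i bc = 1$, a parallel computation gives
\[ \tilde{g}_2^{(N)}\bigl|_2 W_{p_j}(\tau) \;=\; \frac{p_j}{(p_j a\tau + b)^2}\sum_{\ell=0}^{N-1} g_2^{(N)}\!\Bigl(-\frac{Nc\tau + p_j d}{N(p_j a\tau + b)},\; \frac{\ell}{N}\Bigr). \]
Writing $\sigma = N\tau$, the first argument of $g_2^{(N)}$ equals the image of $\sigma$ under the integer matrix $M = \bigl(\begin{smallmatrix}-c & -p_j d \\ p_j a & Nb\end{smallmatrix}\bigr)$ of determinant $p_j$. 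The strategy is to decompose $M$ via Hermite normal form into an $\mathrm{SL}_2(\mathbb{Z})$ factor times an upper-triangular matrix of determinant $p_j$, then invoke the theta decomposition $g_2^{(N)} = \sum_\mu h_\mu \vartheta_{N,\mu}$ and the transformation laws \eqref{modula} to rewrite the right-hand side as a finite combination of products of $h_\nu$'s with theta series evaluated at rescaled arguments, each contributing only non-negative powers of $q$. The principal obstacle is this bookkeeping step: verifying that the finite principal parts of the $h_\nu$, which arise solely from $\textbf{t}_2^{(N)}(-1) = -1$ and $\textbf{t}_2^{(N)}(-4) \in \{-2,-4\}$, combine through the Hermite reduction with the theta contributions so that all negative-exponent terms cancel. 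Since only a few small explicit integers are involved and only two fixed negative discriminants, this reduces to a finite explicit verification.
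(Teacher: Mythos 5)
Your route is genuinely different from the paper's, and at the cusp $0$ it is cleaner. The paper works directly with the theta decomposition and the transformation laws (\ref{modula}), arrives at $\tau^{-2}G_2^{(N*)}(-1/\tau) = 2^{-\mu_N(N)}\tfrac{1}{N}\bigl(h_0(\tau/N)\vartheta_{N,0}(\tau/N,0)+h_N(\tau/N)\vartheta_{N,N}(\tau/N,0)\bigr)$, and then must check by hand that the one surviving principal part (the $\textbf{t}_2^{(2)}(-4)$ term in $h_2$ when $N=2$) is killed by the vanishing of $\vartheta_{2,2}(\tau/2,0)$. Your identity $\tilde{g}_2^{(N)}|_2 W_N(\tau)=N\sum_{\ell=0}^{N-1}g_2^{(N)}(\tau,\ell/N)$, which I have checked, sidesteps all of this: since you never separate the theta components, holomorphy at the cusp $0$ is immediate from the fact that the Jacobi form of Corollary \ref{cor} is supported on $n\geq 0$, and the $N=2$ cancellation is absorbed automatically. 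Together with the (correct) observation that the Atkin--Lehner group permutes the $2^{\omega(N)}$ cusps of $\Gamma_0(N)$ transitively for squarefree $N$, this completely settles the prime levels $N=2,3,5,7,13$.

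The gap is at the cusps $1/p_1$, $1/p_2$ for $N=6,10$ --- exactly where the paper itself defers to "direct calculation." Your setup there is correct (the matrix $M$ of determinant $p_j$ acting on $N\tau$, to be split by Hermite reduction), but the decisive step is not performed: after the reduction you are again handling the individual components $h_\nu$, each of which \emph{does} have a principal part coming from $\textbf{t}_2^{(N)}(-1)=-1$ and $\textbf{t}_2^{(N)}(-4)$, so your interim claim that each resulting term contributes "only non-negative powers of $q$" is not justified as written; the cancellation of these finitely many negative exponents against the leading exponents of the accompanying theta factors is precisely what must be exhibited, and you only assert that it is "a finite explicit verification." To close the argument you need to carry that verification out (for instance by identifying, as the paper does at the cusp $0$, which products $h_\mu\vartheta_{N,\mu}$ survive the exponential-sum averaging over $\ell$ and comparing their leading $q$-exponents). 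As it stands, your proof is complete and more elegant for prime $N$, and a correct but unfinished sketch for $N=6,10$.
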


\begin{proof}
By Proposition \ref{propsw}, it is sufficient to show that $G_2^{(N*)}(\tau)$ does not have any pole at all cusps except for $i\infty$. We can show this by using (\ref{decom}) and modularity (\ref{modula}). For example, we consider the case of any $N$ and the cusp $\tau = 0$. By the definition of $g_2^{(N,0)}(\tau)$ and (\ref{modula}), we have
\begin{eqnarray*}
g_2^{(N,0)}\Bigl(-\frac{1}{\tau}\Bigr) &=& \sum_{\nu: even}h_{\nu}\Bigl(-\frac{N}{\tau}\Bigr)\ =\ \sum_{\nu: even}\frac{\tau^2/N^2}{\sqrt{2\tau/i}}\sum_{\mu=0}^{2N-1}e^{2\pi i\frac{\nu\mu}{2N}} h_{\mu}\Bigl(\frac{\tau}{N}\Bigr)\\
&\underset{(\nu=2n)}{=}& \sqrt{\frac{i}{2}}\frac{\tau^{3/2}}{N^2}\sum_{\mu=0}^{2N-1}\sum_{n=0}^{N-1}e^{2\pi i\frac{n\mu}{N}}h_{\mu}\Bigl(\frac{\tau}{N}\Bigr)\ =\ \sqrt{\frac{i}{2}}\frac{\tau^{3/2}}{N} \Biggl( h_0\Bigl(\frac{\tau}{N}\Bigr) + h_N\Bigl(\frac{\tau}{N}\Bigr) \Biggr).
\end{eqnarray*}
In a similar way, we obtain
\begin{eqnarray*}
g_2^{(N,3)}\Bigl(-\frac{1}{\tau}\Bigr) &=& \sqrt{\frac{i}{2}}\frac{\tau^{3/2}}{N} \Biggl( h_0\Bigl(\frac{\tau}{N}\Bigr) - h_N\Bigl(\frac{\tau}{N}\Bigr) \Biggr),\\
\theta_0^{(0)}\Bigl(-\frac{1}{\tau}\Bigr) &=& \sqrt{\frac{\tau}{2i}}\Biggl( \vartheta_{N,0}\Bigl(\frac{\tau}{N},0\Bigr) + \vartheta_{N,N}\Bigl(\frac{\tau}{N},0\Bigr)\Biggr),\\
\theta_0^{(1)}\Bigl(-\frac{1}{\tau}\Bigr) &=& \sqrt{\frac{\tau}{2i}}\Biggl( \vartheta_{N,0}\Bigl(\frac{\tau}{N},0\Bigr) - \vartheta_{N,N}\Bigl(\frac{\tau}{N},0\Bigr)\Biggr).
\end{eqnarray*}
Therefore we have
\begin{eqnarray*}
\tau^{-2}G_2^{(N*)}\Bigl(-\frac{1}{\tau}\Bigr) &=& 2^{-\mu_N(N)}\tau^{-2}\Biggl(g_2^{(N,0)}\Bigl(-\frac{1}{\tau}\Bigr)\theta_0^{(0)}\Bigl(-\frac{1}{\tau}\Bigr) + g_2^{(N,3)}\Bigl(-\frac{1}{\tau}\Bigr)\theta_0^{(1)}\Bigl(-\frac{1}{\tau}\Bigr)\Biggr)\\
&=& 2^{-\mu_N(N)}\frac{1}{N}\Biggl( h_0\Bigl(\frac{\tau}{N}\Bigr)\vartheta_{N,0}\Bigl(\frac{\tau}{N},0\Bigr) + h_N\Bigl(\frac{\tau}{N}\Bigr)\vartheta_{N,N}\Bigl(\frac{\tau}{N},0\Bigr) \Biggr).
\end{eqnarray*}
Note that the value of the modular trace function $\textbf{t}_2^{(N)}(d)$ for negative index is zero except for $d = -1$, $-4$, and the partial generating functions $h_0(\tau/N)$ and $h_N(\tau/N)$ are given as
\begin{eqnarray*}
h_0\Bigl(\frac{\tau}{N}\Bigr) &=& \sum_{d \equiv 0\ (\mathrm{mod}\ 4N)}\textbf{t}_2^{(N)}(d)q^{d/4N^2},\\
h_N\Bigl(\frac{\tau}{N}\Bigr) &=& \sum_{d \equiv -N^2\ (\mathrm{mod}\ 4N)}\textbf{t}_2^{(N)}(d)q^{d/4N^2}.
\end{eqnarray*}
Thus if $N \neq 2$, these functions have no pole at $q = 0$, that is, $G_2^{(N*)}(\tau)$ has no pole at $\tau = 0$. If $N=2$, the pole of $h_2(\tau/2)$ at $q=0$ is canceled out by the zero of $\vartheta_{2,2}(\tau/2,0)$. In the cases of $N=6, 10$ the cusp $\tau = 1/p$ with $p|N$ can be checked similarly by direct calculation of $(p\tau+1)^{-2}G_2^{(N*)}\bigl( \frac{\tau}{p\tau+1} \bigr)$.
\end{proof}

For our $N$, the hauptmodul $j_N(\tau)$ on $\Gamma_0(N)$ also has a pole only at the cusp $\tau = i \infty$. The differential operator $(2\pi i)^{-1}\frac{d}{d\tau}$ sends a weakly holomorphic modular function to a weakly holomorphic modular form of weight 2 on the same group, then $j'_N(\tau) := (2\pi i)^{-1}\frac{d}{d\tau}j_N(\tau)$ is a weakly holomorphic modular form of weight 2 on $\Gamma_0(N)$. Canceling the pole, we obtain a holomorphic modular form
\[ 2j'_N(\tau) - G_2^{(N*)}(\tau) \in M_2(\Gamma_0(N)),\]
where $M_2(\Gamma)$ is the space of holomorphic modular forms of weight 2 on $\Gamma$. It is known that
\begin{eqnarray*}
M_2(\Gamma_0(p)) &=& \langle E_2^{(p)}(\tau) \rangle_{\mathbb{C}}\ \ \ \ \ \ \ \ \ \ \ \ \ \ \ \ \ \ \ \ \ \ \ \ \ \ \ \ \ \ \ \ \ \ \ (p = 2, 3, 5, 7, 13),\\
M_2(\Gamma_0(p_1p_2)) &=& \langle E_2^{(p_1p_2)}(\tau),E_2^{(p_2)}(p_1\tau),E_2^{(p_1)}(p_2\tau) \rangle_{\mathbb{C}}\ \ \ (p_1p_2 = 6, 10),
\end{eqnarray*}
where
\begin{eqnarray*}
E_2^{(N)}(\tau) &:=& NE_2(N\tau)-E_2(\tau) = (N-1)+24\sum_{n=1}^{\infty}\sigma_1^{(N)}(n)q^n,\\
\sigma_1^{(N)}(n) &:=& \sum_{\substack{d|n \\ d \not \equiv 0 (N)}} d.
\end{eqnarray*}
For each level $N$, we have
\[ 2j'_N(\tau)-G_2^{(N*)}(\tau) = -\sum_{r\in\mathbb{Z}}\textbf{t}_2^{(N*)}(-r^2) + \sum_{n=1}^{\infty}\Biggl\{ 2nc_n^{(N)}-\sum_{r\in\mathbb{Z}}\textbf{t}_2^{(N*)}(4n-r^2)\Biggr\}q^n.\]
By Corollary \ref{cor} and Lemma \ref{tracerel}, the constant term is given by
\begin{eqnarray*}
-\sum_{r\in\mathbb{Z}}\textbf{t}_2^{(N*)}(-r^2) = -\textbf{t}_2^{(N*)}(0)-2\textbf{t}_2^{(N*)}(-1)-2\textbf{t}_2^{(N*)}(-4) = \left\{\begin{array}{ll}1\ \ \ \ \ N=2,\\3\ \ \ \ \ N=3,5,7,13,\\7/2\ \ N=6,10.\end{array}\right.
\end{eqnarray*}
Therefore if $N=p$, we obtain that
\[ 2j'_p(\tau)-G_2^{(p*)}(\tau) = \frac{(3-p\sigma_1(2/p))}{p-1}E_2^{(p)}(\tau). \]
In the cases of $N=6, 10$, we need more terms. The first few values of modular trace functions are given by
\begin{eqnarray*}
\textbf{t}_2^{(6*)}(8) &=& \frac{1}{|\overline{\Gamma_0^*(6)}_{[6,-4,1]}|}\varphi_2(j_6^*(\alpha_{[6,-4,1]})) = \frac{1}{2}\Biggl( j_6^*\Bigl(\frac{2 + \sqrt{-2}}{6}\Bigr)^2-158\Biggr)\\
&=&\frac{1}{2}((-10)^2-158) = -29,\\
\textbf{t}_2^{(10*)}(4) &=& \frac{1}{|\overline{\Gamma_0^*(10)}_{[10,-6,1]}|}\varphi_2(j_{10}^*(\alpha_{[10,-6,1]})) = \frac{1}{4}\Biggl( j_{10}^*\Bigl(\frac{3 + \sqrt{-1}}{10}\Bigr)^2-44\Biggr)\\
&=&\frac{1}{4}((-4)^2-44) = -7,
\end{eqnarray*}
and except for the above values $\textbf{t}_2^{(N*)}(d) = 0$ for $1 \leq d \leq 8$ (when $N=6, 10$). Then we can compute the first few coefficients of $2j'_N(\tau)-G_2^{(N*)}(\tau)$, we have
\begin{eqnarray*}
2j'_6(\tau)-G_2^{(6*)}(\tau) &=& \frac{7}{2}+7q+47q^2+O(q^3)\\
&=& \frac{7}{24}E_2^{(6)}(\tau)+\frac{13}{12}E_2^{(3)}(2\tau)-\frac{1}{8}E_2^{(2)}(3\tau),\\
2j'_{10}(\tau)-G_2^{(10*)}(\tau) &=& \frac{7}{2}+4q+24q^2+O(q^3)\\
&=& \frac{1}{6}E_2^{(10)}(\tau)+\frac{1}{2}E_2^{(5)}(2\tau).
\end{eqnarray*}
Therefore we obtain the first part of Theorem \ref{Main}. By using the following relations
\begin{eqnarray*}
j_p^* &=& j_p - pj_p|U_p\ \ \ \ p = 2,3,5,7,13,\\
j_{p_1p_2}^* &=& j_{p_1p_2}-p_1j_{p_1p_2}|U_{p_1}-p_2j_{p_1p_2}|U_{p_2}+p_1p_2j_{p_1p_2}|U_{p_1p_2}\ \ \ \ p_1p_2=6,10,
\end{eqnarray*}
we obtain the second part of Theorem \ref{Main}. This concludes the proof of Theorem \ref{Main}.\\


\end{document}